\documentclass{amsart}
\usepackage{xypic}
\usepackage[all,cmtip]{xy}

\usepackage{amsmath, amssymb, amsthm, amsfonts, mathrsfs, amsfonts}
\usepackage{cases}

\newcommand{\Z}{\ensuremath{\mathbb{Z}}}

\newtheorem{thm}{Theorem}[section]
\newtheorem*{thm2}{Theorem}

\newtheorem{lemma}[thm]{Lemma}
\newtheorem{prop}[thm]{Proposition}
\theoremstyle{remark}
\newtheorem{rmk}{Remark}[section]
\newtheorem*{rmk2}{Remark}

\theoremstyle{definition}
\newtheorem{defin}{Definition}[section]
\newtheorem*{defin2}{Definition}

\newcommand{\on}{\operatorname}

\begin{document}

\title{Tensor products of higher almost split sequences}
\author{Andrea Pasquali}
\address{Dept. of Mathematics, Uppsala University, P.O. Box 480, 751 06 Uppsala, Sweden}
\email{andrea.pasquali@math.uu.se}
\maketitle

\begin{abstract}
  We investigate how the higher almost split sequences over a tensor product of algebras are related to those over each factor.
  Herschend and Iyama give in \cite{HI11} a criterion for when the tensor product of an $n$-representation finite algebra and an $m$-representation finite algebra is $(n+m)$-representation finite.
In this case we give a complete description of the higher almost split sequences over the tensor product by expressing every higher almost split sequence 
as the mapping cone of a suitable chain map and using a natural notion of tensor product for chain maps.
\end{abstract}

\section{Introduction and conventions} 
In the context of Auslander-Reiten theory one can study almost split sequences of modules over a finite-dimensional algebra $A$. These are certain short exact sequences
$$0\to M\to N\to L\to 0$$ such that $M$ and $L$ are indecomposable, and it turns out that every nonprojective indecomposable module over $A$ appears as the last term of such a sequence
(and every noninjective indecomposable appears as the first term).
Moreover, such sequences are determined up to isomorphism by either the first or the last term (see for reference \cite{ASS06}).
One can do a similar construction in the context of higher dimensional Auslander-Reiten theory, at the cost of restricting to a suitable subcategory $\mathcal  C$ of $\on{mod}A$ that 
contains all injectives and all projectives. Then one gets longer so called $n$-almost split sequences
$$
0\to M\to X_1\to \cdots \to X_n\to L\to 0
$$
in $\mathcal  C$, and again every nonprojective module in $\mathcal  C$ appears at the end of such a sequence and every noninjective at the start of one.
Again, these sequences are determined by their first or last term (see \cite{Iya08}, \cite{Iya11}).
One of the most basic cases where such a situation appears is when $A$ is $n$-representation finite (cf.~\cite{HI11}, \cite{Iya08}).
\begin{defin2}
  \label{def:cluster}
	Let $A$ be a finite-dimensional $k$-algebra, and let $n \in \Z_{>0}$. 
	An \emph{$n$-cluster tilting module} for $A$ is a module $M_A\in \on{mod}A$ such that
	\begin{align*}
	  \on{add}M_A &= \left\{ X \in \on{mod}A \ | \ \on{Ext}^i_A(M_A,X) = 0 \text{ for every } 0<i<n \right\} = \\
			  &= \left\{ X \in \on{mod}A \ | \ \on{Ext}^i_A(X,M_A) = 0 \text{ for every } 0<i<n  \right\}.
	\end{align*}
	We say that $A$ is
	\emph{$n$-representation finite} if $\on{gl.dim}A \leq n$ and there exists an $n$-cluster tilting module for $A$.
	Then $\on{gl.dim}A = 0$ or $\on{gl.dim}A = n$.
\end{defin2}

For such algebras it is known that $\on{add}M_A$ is a subcategory of $\on{mod}A$ that admits $n$-almost split sequences.
We call $D$ the functor $D = \on{Hom}_k(-, k):\on{mod}A \to A\on{mod}$. 
The (higher) \emph{Auslander-Reiten translations} $\tau_n, \tau_n^-$ are defined as follows:
\begin{align*}
	\tau_n &= D\on{Ext}^n_A(-, A): \on{mod}A \to \on{mod}A\\
	\tau_n^-&= \on{Ext}^n_A(DA, -): \on{mod}A \to \on{mod}A.
\end{align*}
It is immediate from this definition that
\begin{align*}
  \tau_n A = 0 = \tau_n^- DA.
\end{align*}
These higher Auslander-Reiten translations behave similarly to the classical ones.
\begin{thm2}
	Let $A$ be an $n$-representation finite $k$-algebra. Let $P_1, \ldots, P_a$ be nonisomorphic representatives of 
	the isomorphism classes of indecomposable projective right $A$-modules, and $I_1, \ldots, I_a$ the corresponding indecomposable injective modules. Then:
	\begin{enumerate}
		\item There exist positive integers $l_1, \ldots, l_a$ and a permutation $\sigma\in S_a$ (the symmetric group over $a$ elements) such that $P_i \cong\tau_n^{l_i-1}I_{\sigma(i)}$
			for every $i$.
		\item There exists a unique (up to isomorphism) basic $n$-cluster tilting module $M_A$, which is given by
			$$
			M_A= \bigoplus_{i = 1}^a \bigoplus_{j=0}^{l_i-1} \tau_n^j I_{\sigma(i)}.
			$$
		\item The Auslander-Reiten translations induce mutually quasi-inverse equivalences
			$$
			\xymatrix{
				\on{add}(M_A/P) \ar[r]_{\tau_n}& \on{add}(M_A/I) \ar@<-1ex>[l]_{\tau_n^-}
			}
			$$
			where $P = \bigoplus_{i = 1}^a P_i$ and $I = \bigoplus_{i = 1}^aI_i$.
	\end{enumerate}
\end{thm2}

\begin{proof}
  See \cite[1.3(b)]{Iya11}.
\end{proof}

From the last point it follows in particular that the $n$-cluster tilting module can be equally described by
$$
M_A= \bigoplus_{i = 1}^a \bigoplus_{j=0}^{l_i-1} \tau_n^{-j} P_i.
$$
\begin{defin2}[\cite{HI11}]
	An $n$-representation finite algebra $A$ is said to be \emph{$l$-homogeneous} if with the above notation we have 
	$l_1=\cdots=l_a=l$. 
\end{defin2}

If $A$ is $n$-representation finite, the category $\on{add}M_A$ decomposes into ``slices'', in the sense that 
every $X\in \on{add}M_A$ can be written uniquely as $X \cong \bigoplus_{i \geq 0 } X_i$, where each $X_i\in \on{add}\tau_n^{-i}A$. 
If $A$ is $l$-homogeneous, then every slice $\on{add}\tau_n^{-j}A$, where  $0\leq j\leq l-1$, has the same number of isomorphism classes of indecomposables.

We denote by $\mathcal  D^b(\on{mod}A)$ the bounded derived category of $\on{mod}A$, and denote by $\varepsilon: \on{mod}A \to \mathcal  D^b(\on{mod}A)$ the natural inclusion. The Nakayama functors
\begin{align*}
  \nu&= -\overset{L}{\otimes}_ADA \cong D\circ R\on{Hom}_A(-, A): \mathcal  D^b(\on{mod}A)\to \mathcal  D^b(\on{mod}A)\\
  \nu^{-1} &=R\on{Hom}_{A^{op}}(D-,A) \cong R\on{Hom}_A(DA, -):\mathcal  D^b(\on{mod}A)\to \mathcal  D^b(\on{mod}A)
\end{align*}
are quasi-inverse equivalences that make the diagram
$$
\xymatrix{
  \mathcal  D^b(\on{mod}A) \ar[r]^\nu & \mathcal  D^b(\on{mod}A)\\ \mathcal K^b(\on{proj}A) \ar[u] \ar[r]^\nu &\mathcal K^b(\on{inj}A) \ar[u]
}
$$
commute ($\mathcal K^b$ denotes the bounded homotopy category).
If $A$ is $n$-representation finite, there is a natural isomorphism of functors $\on{mod}A \to \on{mod}A$
$$
\tau_n \cong H_0\circ\nu_n \circ\varepsilon $$
where $\nu_n = \nu\circ [-n]$.
For every $i$ and for every $0\leq j\leq l_i$, we have that $\varepsilon\tau_n^{-j}P_i = \nu_n^{-j}\varepsilon P_i$.
From now on, explicit mentions of $\varepsilon$ will be omitted for simplicity.

The definition of higher almost split sequences that is convenient to take is the following:
\begin{defin2}
	Let $A$ be an $n$-representation finite $k$-algebra, and let $M_A$ be the corresponding basic $n$-cluster tilting module.
	Let $$
	\xymatrix{
	  0 \ar[r]& C_{n+1} \ar[r]^{f_{n+1}}& C_{n} \ar[r] & \cdots \ar[r]& C_1 \ar[r]^{f_1}& C_0 \ar[r] &0
	}
	$$
	be an exact sequence with terms in $\on{add}M_A$. Such a sequence is an \emph{$n$-almost split sequence} if the
	following holds:
	\begin{enumerate}
		\item For every $i$, we have $f_i \in \on{rad}(C_i, C_{i-1})$.
		\item The modules $C_{n+1}$ and $C_0$ are indecomposable.
		\item \label{eq:cond} The sequence of functors  from $\on{add}M_A$ to $k\on{mod}$
			$$
			\xymatrix{
			  0 \ar[r]& \on{Hom}_A(-,C_{n+1}) \ar[r]^-{f_{n+1}\circ-}& 
				\on{Hom}_A(-,C_{n}) \ar[r] & \cdots \\
				&\cdots\ar[r] & \on{Hom}_A(-,C_1) \ar[r]^{f_1\circ-}
				& \on{rad}_A(-,C_0) \ar[r] &0
			}
			$$
			is exact (i.e.~it is an exact sequence when evaluated at any $X\in\on{add}M_A$).
	\end{enumerate}
\end{defin2}
\begin{thm2}
  \label{thm:exist}
	Let $A$ be an $n$-representation finite $k$-algebra, and let $M_A$ be the corresponding basic $n$-cluster tilting module. 
	Then we have the following:
	\begin{enumerate}
	  \item For every indecomposable nonprojective module $N \in \on{add}M_A$ there exists an $n$-almost split 
			sequence
			$$
			\xymatrix{
				0 \ar[r] & \tau_n N \ar[r] & \cdots \ar[r]& N\ar[r] &0,
			}
			$$
			and any $n$-almost split sequence whose last term is $N$ is isomorphic to this one.
		      \item For every indecomposable noninjective module $M \in \on{add}M_A$ there exists an $n$-almost split sequence
			$$
			\xymatrix{
				0\ar[r]&M \ar[r] &\cdots\ar[r]&\tau_n^-M\ar[r]&0,
			}
			$$
			and any $n$-almost split sequence whose first term is $M$ is isomorphic to this one.
	\end{enumerate}
\end{thm2}
\begin{proof}
  See \cite[Theorem 3.3.1]{Iya07}. Notice that the term ``$n$-cluster tilting subcategory'' has replaced ``$(n-1)$-orthogonal subcategory'' in recent literature. 
\end{proof}

\begin{rmk2}
  The usual, more general definition of $n$-almost split sequences that one takes requires that the condition dual to (\ref{eq:cond}) holds as well (as in \cite[Definition 2.1]{Iya11}).
However, in the case we are considering (module categories over an $n$-representation finite algebra), the two definitions are equivalent (see \cite[Proposition 2.10]{Iya08}).
\end{rmk2}

In their paper \cite{HI11}, Herschend and Iyama construct a class of examples of $n$-representation finite algebras via tensor products, in the setting where the ground field $k$ is perfect. 
Namely, they find a necessary and sufficient condition (being $l$-homogeneous for the same value of $l$) under which the tensor product $A\otimes B = 
A\otimes_k B$ of an $n$-representation finite algebra $A$ with an $m$-representation finite algebra $B$
is $(n+m)$-representation finite. They also show that in this case every indecomposable of $\on{add}M_{A\otimes B}$ is of the form 
$L\otimes N$ for some indecomposables $L\in \on{add}M_A$ and $N\in \on{add}M_B$, and that $\tau_{n+m}^{\pm}L\otimes N\cong \tau_n^{\pm}L\otimes \tau_m^{\pm}N$.
Moreover, in this case the algebra $A\otimes B$ is itself $l$-homogeneous.
\begin{rmk2}
  Even though not explicitly stated in \cite{HI11}, necessity of the condition comes from the following observation.
  Let  $$M= \bigoplus_{i, j} \bigoplus_d \tau_{n+m}^{-d} P_i\otimes Q_j$$ 
  where $P_i$ and $Q_j$ run over the indecomposable summands of $A, B$ respectively. 
  If $A$ and $B$ are not $l$-homogeneous for the same value of $l$, then $M$ has either an indecomposable summand of the form $S= L\otimes J$ where $J$ is injective and 
  $L$ is not, or one of the form $S=I\otimes N$ where $I$ is injective and $N$ is not. 
  On the other hand, if $A\otimes B$ is $(n+m)$-representation finite, then $M$ is an $(n+m)$-cluster tilting module, and hence the
  indecomposable injective $A\otimes B$-modules are precisely those indecomposable direct summands $I\otimes J$ of $M$ 
  such that $\tau_{n+m}^-I\otimes J = 0$. 
  Thus we reach a contradiction, since $S$ is not injective, but $\tau_{n+m}^-S = 0$.
\end{rmk2}
In this setting, if $$
0\to L\otimes N\to \cdots\to \tau_{n+m}^-L\otimes N \to 0$$
is an $(n+m)$-almost split sequence, then $\tau_{n+m}^-L\otimes N \cong \tau_n^-L\otimes \tau_m^-N$.
On the other hand, there are $n$- respectively $m$-almost split sequences
$$
0\to L\to \cdots\to \tau_n^-L\to 0$$
and 
$$
0\to N\to \dots\to \tau_m^-N\to 0,$$
so the starting and ending points behave well with respect to tensor products.
It is then a natural question to describe the relation between the sequence starting in $L\otimes N$ and the sequences starting in $L$ and $N$.
This is the question that we address, and we answer it in the setting where $A$ is $n$-representation finite, $l$-homogeneous and $B$ is $m$-representation finite, $l$-homogeneous.

For a precise statement, we need some more notation. 
 For a preadditive category $\mathcal  A$,
we denote by $\mathcal  C(\mathcal  A)$ the category of chain complexes of $\mathcal  A$.
If $A$ is a $k$-algebra and $\mathcal  A$ is a full subcategory of $\on{mod}A$, we denote by $\mathcal  C_r(\mathcal  A)$ 
the full subcategory of $\mathcal  C(\mathcal  A)$ whose objects are chain complexes where the differentials are radical morphisms (i.e.~$d_i\in
\on{rad}(A_i, A_{i-1})$ for every $i$).
Let $\mathcal  B$ be a full subcategory of $\mathcal  C(\mathcal  A)$.
We denote by $\on{Mor}(\mathcal  B)$ the category whose objects are chain maps $A_\bullet \to B_\bullet$ for $A_\bullet, B_\bullet\in 
\mathcal  B$, and whose morphisms are the obvious commutative diagrams. We denote by $\on{Mor}_r(\mathcal  B)$ the full
subcategory of $\on{Mor}(\mathcal  B)$ whose objects are radical chain maps $A_\bullet\to B_\bullet$ for $A
_\bullet,B_\bullet\in \mathcal  B$ (meaning that
for every $i$ the map $A_i \to B_i$ is radical).
We often view finite (exact) sequences as bounded chain complexes, and unless otherwise specified the degree-0 term is the 
rightmost nonzero term.
With this point of view in mind, we denote by $\mathcal  B^n$ the full subcategory of $\mathcal  B$ whose objects are complexes $C_\bullet$
satisfying $C_i = 0$ for every $i <0$ and $i >n$.
\begin{defin2}
  Let $A$ be an $n$-representation finite $k$-algebra, and let $i\in \Z_{\geq 0}$.
  Let $L\in\on{add}M_A$ be indecomposable noninjective, and let $C_\bullet$ be the corresponding 
	$n$-almost split sequence. Then we say that $C_\bullet$ \emph{starts in slice $i$} if $L \in \on{add}\tau_n^{-i}A$.
\end{defin2}
We denote by $\on{Cone} $ the mapping cone (see Definition \ref{def:cone}).
We use the symbol $\otimes^T$ for the usual ``total tensor product'' bifunctor
\begin{align*}
  -\otimes^T- : \mathcal C(\on{mod}A)\times \mathcal C(\on{mod}B)\to \mathcal C(\on{mod}A\otimes B)
\end{align*}
induced by $\otimes$ (see Section 3 for details). 
Our main result is the following:

\begin{thm}
  \label{thm:main}
  Let $k$ be a perfect field. 
  Let $A$ and $B$ be $n$- respectively $m$-representation finite $k$-algebras. Suppose that $A$ and $B$ are $l$-homogeneous for some common $l$.
  Let $\varphi\in \on{Mor}_r(\mathcal  C_r(\on{add}M_A))$ and let 
  $\psi \in \on{Mor}_r(\mathcal  C_r(\on{add}M_B))$. Suppose that $\on{Cone}(\varphi)$ and $\on{Cone}(\psi)$ are  $n$- respectively $m$-almost split sequences starting in slice $i$ 
  for some common $i \geq 0$. Then $\on{Cone}(\varphi\otimes^T\psi)$ is an $(n+m)$-almost split sequence.
\end{thm}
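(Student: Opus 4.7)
The plan is to verify directly each of the three defining clauses of an $(n+m)$-almost split sequence for the complex $\on{Cone}(\varphi\otimes^T\psi)$. The backbone throughout is the Herschend--Iyama description of $\on{add}M_{A\otimes B}$: every indecomposable is of the form $X\otimes Y$ with $X\in\on{add}M_A$, $Y\in\on{add}M_B$ indecomposable; one has $\tau_{n+m}^\pm(X\otimes Y)\cong\tau_n^\pm X\otimes\tau_m^\pm Y$; and for finite-dimensional modules the natural map $\on{Hom}_A(X_1,X_2)\otimes_k\on{Hom}_B(Y_1,Y_2)\to\on{Hom}_{A\otimes B}(X_1\otimes Y_1,X_2\otimes Y_2)$ is an isomorphism.

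The \emph{formal} clauses I would dispatch first. The terms of $\on{Cone}(\varphi\otimes^T\psi)$ are finite direct sums of tensors of indecomposable summands of $M_A$ with summands of $M_B$, hence lie in $\on{add}M_{A\otimes B}$. Reading off the cone-and-total-tensor formulas, the first and last terms are $L\otimes N$ and $\tau_n^-L\otimes\tau_m^-N\cong\tau_{n+m}^-(L\otimes N)$, which are indecomposable. The differentials are built by tensoring the (radical) differentials of $P_\bullet,Q_\bullet,R_\bullet,S_\bullet$ with identities and by the components of $\varphi,\psi$; such tensors remain radical in $\on{add}M_{A\otimes B}$ since the tensor of a non-isomorphism with an indecomposable is a non-isomorphism between indecomposables. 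Exactness reduces to $\varphi\otimes^T\psi$ being a quasi-isomorphism, which follows from $\varphi$ and $\psi$ being quasi-isomorphisms (their cones are exact) together with the Künneth formula over the field $k$.

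The substantive step is clause (3). For an indecomposable $X=L'\otimes N'\in\on{add}M_{A\otimes B}$, the Hom-tensor isomorphism gives
\[\on{Hom}_{A\otimes B}\bigl(X,\on{Cone}(\varphi\otimes^T\psi)\bigr)\cong\on{Cone}\bigl(\on{Hom}_A(L',\varphi)\otimes^T\on{Hom}_B(N',\psi)\bigr)\]
as complexes of $k$-vector spaces, reducing the problem to computing the homology of a cone of a tensor product of chain maps. The almost split hypotheses translate into: $\on{Cone}(\on{Hom}_A(L',\varphi))\cong\on{Hom}_A(L',\on{Cone}(\varphi))$ has homology concentrated in the rightmost degree, equal to $D_L:=\on{End}_A(L)/\on{rad}$ if $L'\cong\tau_n^-L$ and zero otherwise, and analogously for $\psi$. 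Setting $\alpha=\on{Hom}_A(L',\varphi)$ and $\beta=\on{Hom}_B(N',\psi)$, I would factor $\alpha\otimes^T\beta=(\alpha\otimes\on{id})\circ(\on{id}\otimes\beta)$ and invoke the octahedral axiom in the derived category of $k$-vector spaces to produce a distinguished triangle relating $\on{Cone}(\alpha\otimes^T\beta)$ to $(\text{source of }\alpha)\otimes^T\on{Cone}(\beta)$ and $\on{Cone}(\alpha)\otimes^T(\text{target of }\beta)$, whose outer terms have homology controlled by Künneth over $k$.

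The main obstacle I foresee is threading this through to pin down the homology of $\on{Cone}(\alpha\otimes^T\beta)$ precisely: the outer terms of the octahedral triangle carry homology in multiple degrees coming from the ``raw'' complexes $\on{Hom}_A(L',P_\bullet)$, $\on{Hom}_B(N',S_\bullet)$, and their partners, and one must use the slice structure together with the common-$l$-homogeneity hypothesis to see that the long exact sequence collapses, leaving homology of $\on{Cone}(\alpha\otimes^T\beta)$ only in the rightmost degree and nonzero exactly when $L'\cong\tau_n^-L$ and $N'\cong\tau_m^-N$. One must also identify the surviving residue as exactly $\on{End}_{A\otimes B}(L\otimes N)/\on{rad}$ rather than a proper subquotient of $D_L\otimes_kD_N$; this is where perfectness of $k$ enters, via the compatibility of simple quotients of local endomorphism rings with tensor products of division algebras.
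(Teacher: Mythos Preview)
Your approach is viable and genuinely different from the paper's. Both arguments ultimately pivot on the same structural fact---that every indecomposable $L'\otimes N'\in\on{add}M_{A\otimes B}$ has $L'$ and $N'$ lying in the \emph{same} slice---but they organise the computation around it differently.

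The paper does not use the octahedral axiom. It works with the functor $F_X$ (replacing $\on{Hom}$ by $\on{rad}$ in degree $0$) and the criterion that $\on{Cone}(\varphi\otimes^T\psi)$ is $(n+m)$-almost split iff $\tilde F_{M\otimes N}(\varphi\otimes^T\psi)$ is a quasi-isomorphism for every indecomposable $M\otimes N$. This map is compared to the quasi-isomorphism $\tilde F_M(\varphi)\otimes^T\tilde F_N(\psi)$ via a monomorphism $\iota'$, and the heart of the proof is an explicit degree-by-degree computation (invoking a $3\times 3$ lemma in degree $0$) showing that
\[
\on{coker}\iota'\cong F_M(A^1_\bullet)\otimes S(N,B^1_0)\ \oplus\ S(M,A^1_0)\otimes F_N(B^1_\bullet).
\]
Exactness of this cokernel then follows from the slice argument: $S(N,B^1_0)\neq 0$ forces $N\in\mathcal B_{i+1}$, hence $M\in\mathcal A_{i+1}$, hence $\on{Hom}(M,A^0_\bullet)=0$, so $F_M(A^1_\bullet)\cong\on{Cone}(\tilde F_M(\varphi))$ is exact.

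Your octahedral route sidesteps the cokernel computation entirely, and the ``obstacle'' you flag dissolves once the same-slice constraint is applied up front rather than at the end. If $L'\in\mathcal A_j$ and $N'\in\mathcal B_j$ with $j\neq i+1$, then $S(L',\tau_n^-L)=S(N',\tau_m^-N)=0$, so both outer terms of your triangle are acyclic; if $j=i+1$ then $\on{Hom}(L',A^0_\bullet)=\on{Hom}(N',B^0_\bullet)=0$ by directedness of morphisms between slices, so $\alpha\otimes^T\beta$ has zero source and its cone is just $\on{Hom}(L',A^1_\bullet)\otimes^T\on{Hom}(N',B^1_\bullet)$, whose homology is $S(L',\tau_n^-L)\otimes S(N',\tau_m^-N)$ in degree $0$ by K\"unneth. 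The long exact sequence never carries two nonzero outer terms simultaneously. The residue identification you worry about is precisely the radical formula $\on{rad}(X\otimes X')=\on{rad}(X)\otimes\on{End}(X')+\on{End}(X)\otimes\on{rad}(X')$ over a perfect field, which yields $S(L'\otimes N',\tau_n^-L\otimes\tau_m^-N)=S(L',\tau_n^-L)\otimes S(N',\tau_m^-N)$; this same lemma also covers your claim that the cone differentials are radical---your justification via ``tensor of a non-isomorphism with an indecomposable'' is not adequate, since the terms involved are in general decomposable.
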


\begin{rmk2}
  In Theorem \ref{thm:every} we show that every $n$-almost split sequence is isomorphic to $\on{Cone}(\varphi)$ for some suitable $\varphi$, so all the $(n+m)$-almost split sequences 
  in $\on{mod}(A\otimes B)$ are obtained by this procedure.
\end{rmk2}

\begin{rmk2}
  The sequence $\on{Cone}(\varphi\otimes^T \psi)$ starts in slice $i$. This is because we have (see \cite{HI11})
  \begin{align*}
    \tau_n^{-i}A\otimes \tau_{m}^{-i}B = \tau_{n+m}^{-i}A\otimes B.
  \end{align*}
  On the other hand, if $L\in \on{add}\tau_n^{-i} A$ and $N\in\on{add}\tau_m^{-j}B$ with $i\neq j$, then $L\otimes N\not \in \on{add}M_{A\otimes B}$,
  so there is in principle no $(n+m)$-almost split sequence starting in $L\otimes N$.
\end{rmk2}

\begin{rmk2}
If we drop the condition guaranteeing that $A\otimes B$ is $(n+m)$-representation finite, then we can perform the same construction, and we still get some 
sequences in $\on{mod}(A\otimes B)$ which retain some interesting properties. Similarly, one could tensor sequences that do not start in the same slice.
This is a possible topic for future investigation.
\end{rmk2}

In Section 2 we show that every $n$-almost split sequence over an $n$-representation finite algebra is isomorphic to the mapping cone of a 
suitable chain map of complexes, then we relate the property of being $n$-almost split to a property of the chain map.
In Section 3 we define the functor $\otimes^T$ that we have mentioned above, and 
we prove the main theorem.
In Section 4 we compute an example where we explicitly construct a 2-almost split sequence and a 3-almost split sequence starting from a
1-representation finite algebra.

\paragraph{\textbf{Conventions.}}
Throughout this paper, we denote by $k$ a perfect field (cf.~\cite{HI11}). All $k$-algebras are associative and unitary.
For a ring $R$, we denote by $\on{mod}R$ (resp.~$R\on{mod}$) the category of finitely generated right (resp.~left)
$R$-modules. Unless otherwise specified, modules are right modules. Subcategory means full subcategory.
For a $k$-algebra $A$ we denote by $\on{rad}_A(-, -)$ the subfunctor of $\on{Hom}_A(-,-)$ defined by
\begin{align*}
  \on{rad}_{A}(X, Y) = \left\{ f\in\on{Hom}_{A}(X, Y)\ |\ \on{id}_X-g\circ f \text{ is invertible } \forall g\in \on{Hom}_A(Y, X) \right\}
\end{align*}
for all $A$-modules $X, Y$ (see \cite[Appendix 3]{ASS06}).
Thus $\on{rad}_A(-,-)$ is biadditive, and for two indecomposable modules $X\not\cong Y$ we have $\on{rad}_A(X, Y) = \on{Hom}_A(X, Y)$. 
Moreover, for an indecomposable module $X$ we have that $\on{rad}_A(X) := \on{rad}_A(X, X)$ is the Jacobson radical of the algebra $\on{End}_A(X)$.
We denote by $S_A(X, Y)$ the quotient $S_A(X, Y) = \on{Hom}_A(X, Y) /\on{rad}_A(X, Y)$ (and sometimes write only $S_A(X)$ instead of $S_A(X, X)$).
To simplify the notation, we sometimes omit the reference to the algebra when this is clear from the context (writing for instance $\on{Hom}$ instead of $\on{Hom}_A$).
For the rest of this paper, fix finite-dimensional $k$-algebras $A$ and $B$, where $A$ is $n$-representation finite and $B$ is $m$-representation finite.
Set $\mathcal A = \on{add}M_A$, $\mathcal  B = \on{add}M_B$, $\mathcal  A_i = \on{add}\tau_n^{-i}A$ for $i \geq 0$, and $\mathcal  B_j = \on{add}\tau_m^{-j}B$ for $j\geq 0$.

\section{$n$-almost split sequences as mapping cones}
\subsection{Preliminaries}

If $A$ is $n$-representation finite, then the morphisms in $\mathcal  A$ are ``directed'' with respect to the action of $\tau_n^-$.
More precisely, we have the following:

\begin{prop}
	\label{prop:hom}
	Let $A$ be an $n$-representation finite $k$-algebra. Let $M\in\mathcal A_i$ and $N\in \mathcal A_j$ with
	$i>j$. Then 
	$$
	\on{Hom}_A(M, N) = 0.$$
\end{prop}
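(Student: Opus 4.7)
The plan is to lift the problem to the bounded derived category $\mathcal D^b(\on{mod}A)$ and exploit the facts that $\nu$ and $\nu_n$ are equivalences there, together with the observation that $\nu_n$ shifts stalk complexes in degree $0$ upwards by $n$ cohomological degrees.

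First, by biadditivity of $\on{Hom}$ I reduce to the case where $M$ and $N$ are indecomposable, so I may write $M = \tau_n^{-i}P$ and $N = \tau_n^{-j}Q$ for indecomposable projective summands $P, Q$ of $A$. The requirement that these are nonzero summands of $M_A$ forces $i \leq l_P - 1$ and $j \leq l_Q - 1$, where $l_P$ and $l_Q$ are the homogeneity indices attached to $P$ and $Q$. Using the identity $\varepsilon\tau_n^{-s}P_r = \nu_n^{-s}\varepsilon P_r$ recalled just before the proposition, I rewrite
\[
\on{Hom}_A(M, N) = \on{Hom}_{\mathcal D^b(\on{mod}A)}(\nu_n^{-i}P, \nu_n^{-j}Q),
\]
and applying the equivalence $\nu_n^j$ to both arguments reduces the claim to showing that $\on{Hom}_{\mathcal D^b}(\nu_n^{-k}P, Q) = 0$ for $k := i - j \geq 1$. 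Since $k \leq l_P - 1$, the object $\nu_n^{-k}P$ coincides with the module $\tau_n^{-k}P$ viewed as a stalk complex in degree $0$.

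Next, from $\nu_n = \nu \circ [-n]$ and the fact that shifts commute with $\nu$, a brief computation gives $\nu \circ \nu_n^{-k} = \nu_n^{-(k-1)} \circ [n]$. Applying the equivalence $\nu$ to both arguments, and using $k - 1 \leq l_P - 2$ to identify $\nu_n^{-(k-1)}P$ with the module $\tau_n^{-(k-1)}P$, one obtains
\[
\on{Hom}_{\mathcal D^b}(\nu_n^{-k}P, Q) = \on{Hom}_{\mathcal D^b}(\tau_n^{-(k-1)}P\,[n], I_Q),
\]
where $I_Q = \nu Q$ is the injective module corresponding to $Q$. Translating through $\on{Hom}_{\mathcal D^b}(X[n], Y) = \on{Ext}^{-n}_A(X, Y)$, this group equals $\on{Ext}^{-n}_A(\tau_n^{-(k-1)}P, I_Q)$, which vanishes because $n \geq 1$.

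There should be no serious obstacle beyond bookkeeping with the shift identities. Morally the argument says that $\nu_n$ raises cohomological degree by $n$ on stalk complexes in degree $0$; iterating $k$ times places $\tau_n^{-k}P$ exactly $nk$ degrees above $Q$ in the derived category, and no morphism can close this gap.
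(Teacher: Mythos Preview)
Your argument is correct and follows the same strategy as the paper's: reduce to indecomposables, pass to $\mathcal D^b(\on{mod}A)$ via the identity $\varepsilon\tau_n^{-s}P=\nu_n^{-s}\varepsilon P$, and exploit that $\nu_n$ is an autoequivalence to shift indices. The only difference is the endgame: the paper applies $\nu_n^{i}$ to reach $\on{Hom}_{\mathcal D^b}(P,\nu_n^{\,i-j}Q)$ and recognises this as a summand of $\on{Hom}_{\mathcal D^b}(A,\nu_n^{\,i-j}Q)=H_0(\nu_n^{\,i-j}Q)=\tau_n^{\,i-j}Q=0$, whereas you apply $\nu_n^{j}$ and then one further $\nu$ to land on a negative Ext group; the paper's route avoids the bookkeeping with $l_P$ that you need to keep $\nu_n^{-(k-1)}P$ a stalk complex, while yours sidesteps the identification $H_0\circ\nu_n^{\,i-j}=\tau_n^{\,i-j}$ on projectives.
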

\begin{proof}
	It is enough to check the result for $M, N$ indecomposable, i.e.~$M \cong \tau_n^{-i}P_1$ and $N\cong \tau_n^{-j}P_2$ for 
	some indecomposable projectives $P_1, P_2 \in \on{add}A$.
	We have
	\begin{align*}
	  \on{Hom}_A(M,N) &= \on{Hom}_{\mathcal  D^b(\on{mod}A)}(M,N) = \on{Hom}_{\mathcal  D^b(\on{mod}A)}
		(\nu_n^{-i}P_1, \nu_n^{-j}P_2) =\\&= \on{Hom}_{\mathcal  D^b(\on{mod}A)}(P_1, \nu_n^{i-j}P_2).
	\end{align*}
	In particular, $\on{Hom}_A(M,N)$ is a direct summand of (with the previous notation)
	\begin{align*}
		\bigoplus_{i=1}^a \on{Hom}_{\mathcal  D^b(\on{mod}A)}(P_i,\nu_n^{i-j}P_2 )  &= \on{Hom}_{\mathcal  D^b(\on{mod}A)}
		(A,\nu_n^{i-j}P_2 ) =\\&= H_0(\nu_n^{i-j}P_2) = \tau_n^{i-j}P_2 = 0
	\end{align*}
	since $i>j$ and $P_2$ is projective, so we are done.
\end{proof}

\begin{rmk}
  For $n = 1$, this is a special case of \cite[Corollary VIII.1.4]{ARS97}, since ``$1$-representation finite'' means ``hereditary and representation finite''.
\end{rmk}

We will be interested in checking whether a given complex is an $n$-almost split sequence, and for this purpose
it is convenient to take a slightly different point of view on the definition of $n$-almost splitness. Namely, fix an object $X \in \mathcal A$.
 We can define a functor
 $F_X: \mathcal  C_r(\mathcal A)\to \mathcal  C(k\on{mod})$ by mapping
$$
C_\bullet=
\xymatrix{
	\cdots \ar[r]^{f_{i+1}} & C_i \ar[r]^{f_i} & \cdots \ar[r]^{f_{1}} &  C_0 \ar[r]^{f_0} &\cdots
}
$$
to 
$$
F_X(C_\bullet) =
\xymatrix{
	\cdots \ar[r]^-{f_{i+1}\circ-} &
	\on{Hom}(X, C_i) \ar[r]^-{f_i \circ-} & \cdots \ar[r]^-{f_{1}\circ-} 
	& \on{rad}(X, C_0) \ar[r]^-{f_0\circ-}& \cdots
}
$$
(that is, $F_X$ is the subfunctor of $\on{Hom}(X, -)$ given by replacing $\on{Hom}(X, C_0)$ with $\on{rad}(X,C_0)$). 
This is well defined since $f_{1}$ is a radical morphism, hence the image of $f_1\circ-$ lies in $\on{rad}(X, C_0)$.
Then for a complex $C_\bullet \in \mathcal  C_r(\mathcal A)$ such that $C_i = 0 $ for $i>n+1$ and $i <0$, saying that it is 
an $n$-almost split sequence is equivalent to saying that $C_{n+1}$ and $C_0$ are indecomposable, $C_\bullet$ is exact, and $F_X(C_\bullet)$ is exact for every $X \in\mathcal A$ (or equivalently, for every indecomposable $
X\in \mathcal A$).
Similarly, we can define a subfunctor $G_X$ of the contravariant functor $\on{Hom}(-,X):\mathcal  C_r(\mathcal A) \to \mathcal  C(k\on{mod}) $ by mapping $C_\bullet$ to
$$
G_X(C_\bullet) = 
\xymatrix{
	\cdots \ar[r]^-{-\circ f_0} &
	\on{Hom}(C_0, X) \ar[r]^-{-\circ f_1} & \cdots \ar[r]^-{-\circ f_{n+1}} 
	& \on{rad}(C_{n+1},X) \ar[r]^-{-\circ f_{n+2}}& \cdots
}
$$
This is again well defined, and if $C_\bullet \in \mathcal  C_r(\mathcal A)$ is $n$-almost split then $G_X(C_\bullet)$ is 
exact for every $X \in \mathcal A$ (cf.~\cite[Proposition 2.10]{Iya08}).

\subsection{From sequences to cones}
\begin{defin}
  \label{def:cone}
	Let $\mathcal  D$ be an abelian category. 
	Let $A_\bullet \in \mathcal  C(\mathcal  D)$ with differentials $d_i: A_i \to A_{i-1}$. For any $m\in \Z$, the \emph{shift} $A[m]_\bullet$ of $A_\bullet$ is the complex 
	with objects $A[m]_i = A_{i+m}$ and differentials $d[m]_i: A[m]_i \to A[m]_{i-1}$ given by $d[m]_i = (-1)^m d_{i+m}$ for every $i$.
	
	Let $(A_\bullet, d_\bullet^A)$ and $(B_\bullet, d_\bullet^B)$ be complexes in $\mathcal  C(\mathcal  A)$. Let $f: A_\bullet \to B_\bullet$ 
	be a morphism of complexes with components $f_i:A_i \to B_i$. The \emph{shift} of $f$ is the morphism $f[m]: A_\bullet[m]\to B_\bullet[m]$
	with components $f[m]_i = f_{i+m}$. Thus $[m]$ is an endofunctor on $\mathcal C(\mathcal D)$.
	The \emph{mapping cone} $\on{Cone}(f)$ of $f$ is the complex with objects
	$$\on{Cone}(f)_i = A[-1]_i \oplus B_i$$
	and differentials
	$$
	d_i^{\on{Cone}(f)} = 
	\begin{bmatrix}
		d[-1]_i^A & 0\\
		f[-1]_i & d^B_i
	\end{bmatrix}.
	$$
\end{defin}

\begin{lemma}
	Let $\mathcal  D$ be an abelian category, and let 
	$f$ be a morphism of complexes in $\mathcal  C(\mathcal  D)$. Then $\on{Cone}(f)$ is exact if and only if $f$ is a quasi-isomorphism.
	\label{lem:qiso}
\end{lemma}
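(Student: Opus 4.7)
The plan is to deduce the lemma from the long exact sequence in homology associated with the standard short exact sequence of complexes built from the cone.

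First I would exhibit the short exact sequence
\begin{equation*}
  0 \to B_\bullet \xrightarrow{\iota} \on{Cone}(f) \xrightarrow{\pi} A_\bullet[-1] \to 0
\end{equation*}
where $\iota_i = \begin{bmatrix} 0 \\ \on{id}_{B_i} \end{bmatrix}$ and $\pi_i = \begin{bmatrix} \on{id}_{A_{i-1}} & 0 \end{bmatrix}$. A direct check using the block form of $d^{\on{Cone}(f)}$ shows that both $\iota$ and $\pi$ commute with the differentials, and termwise the sequence is evidently split exact in $\mathcal{D}$.

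Next I would pass to the induced long exact sequence in homology, which exists since $\mathcal{D}$ is abelian:
\begin{equation*}
  \cdots \to H_i(B_\bullet) \xrightarrow{H_i(\iota)} H_i(\on{Cone}(f)) \xrightarrow{H_i(\pi)} H_i(A_\bullet[-1]) \xrightarrow{\partial_i} H_{i-1}(B_\bullet) \to \cdots
\end{equation*}
Using the shift convention $A_\bullet[-1]_i = A_{i-1}$ with differential $-d^A_{i-1}$, we have $H_i(A_\bullet[-1]) = H_{i-1}(A_\bullet)$. The key computation is then to identify $\partial_i$ with $\pm H_{i-1}(f)$: given a cycle $a \in A_{i-1}$ representing a class in $H_{i-1}(A_\bullet)$, lift it to $\begin{bmatrix} a \\ 0 \end{bmatrix} \in \on{Cone}(f)_i$, apply the cone differential to obtain $\begin{bmatrix} -d^A_{i-1}(a) \\ f_{i-1}(a) \end{bmatrix} = \begin{bmatrix} 0 \\ f_{i-1}(a) \end{bmatrix}$ (since $a$ is a cycle), and pull back through $\iota$ to recover $f_{i-1}(a)$. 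Thus $\partial_i = \pm H_{i-1}(f)$ up to a sign coming from the shift convention.

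With the connecting map identified, the conclusion is immediate from the long exact sequence: $\on{Cone}(f)$ has vanishing homology in every degree if and only if every $\partial_i$ is an isomorphism, if and only if $H_{i-1}(f)$ is an isomorphism for every $i$, which is precisely the statement that $f$ is a quasi-isomorphism. The only nontrivial step is the identification of $\partial$ with $H(f)$; all the rest is a routine unwinding of the definitions, and in particular no obstacle specific to the $n$-representation finite setting arises, since the lemma is purely homological.
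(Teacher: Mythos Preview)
Your argument is correct and is exactly the standard proof; the paper itself does not give an argument but simply cites \cite[III.18]{GM03}, where precisely this long exact sequence and the identification of the connecting morphism with $H(f)$ are carried out. The only minor remark is that your element-chasing computation of $\partial$ tacitly treats $\mathcal D$ as a category of modules; in a general abelian category one either appeals to the Freyd--Mitchell embedding or rephrases the computation diagrammatically, but this is routine and does not affect the validity of your proof.
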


\begin{proof}
  This follows straight from \cite[III.18]{GM03}.
\end{proof}

Let $A$ be $n$-representation finite, and let
$$
	C_\bullet =
	\xymatrix{
		0 \ar[r]& C_{n+1} \ar[r]^{f_{n+1}}& C_{n} \ar[r] & \cdots \ar[r]& C_1 \ar[r]^{f_1}& C_0 \ar[r] &0
	}
	$$
	be an $n$-almost split sequence starting in slice $i_0$ for some $i_0\in \Z_{\geq 0}$. Then we can decompose the modules appearing in the sequence according to the slice
	decomposition of $\mathcal A$, i.e.~we write
	$$C_m = \bigoplus_{j\geq 0} B_m^j$$ with $B_m^j\in \mathcal A_j$ for every $m,j$. We know that $C_{n+1}\in\mathcal A_{i_0}$ and
	$C_{0}\in\mathcal A_{i_0+1}$ are indecomposable. A first result, which can be seen as a generalisation of \cite[Lemma VIII.1.8(b)]{ARS97}, is the following:

\begin{lemma}
  \label{lem:claim}
  With the above notation, we have
	\begin{align*} 
		B_m^{j} = 0
		\text{ for any } m \text{ and for } j \not\in \left\{ i_0, i_0+1 \right\}.
	\end{align*}
\end{lemma}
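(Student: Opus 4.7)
The plan is to combine the slice-directedness of morphisms provided by Proposition \ref{prop:hom} with the exactness of both $F_X$ and $G_X$ on $C_\bullet$. Decomposing the differential as $f_m = \bigl((f_m)^{j,j'}\bigr)$ with $(f_m)^{j,j'}: B_m^j \to B_{m-1}^{j'}$, Proposition \ref{prop:hom} forces $(f_m)^{j,j'} = 0$ whenever $j > j'$. Consequently, for every $k$ the upper truncation $\bigoplus_{j \geq k} B_\bullet^j$ is a subcomplex of $C_\bullet$, the lower truncation $\bigoplus_{j \leq k} B_\bullet^j$ is a quotient complex, and the slice-$k$ diagonal $B_\bullet^k$ with differentials $(f_m)^{k,k}$ naturally inherits the structure of a complex with radical differentials. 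By hypothesis only slice $i_0$ appears in $C_{n+1}$ and only slice $i_0+1$ in $C_0$.

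Let $j_{\max}$ be the largest slice containing some nonzero $B_m^{j_{\max}}$, and suppose for contradiction $j_{\max} > i_0+1$. By maximality, $\bigoplus_{j \geq j_{\max}} B_\bullet^j = B_\bullet^{j_{\max}}$ is a bounded subcomplex of $C_\bullet$ whose end terms $B_0^{j_{\max}}$ and $B_{n+1}^{j_{\max}}$ both vanish. For any indecomposable $X \in \mathcal{A}_{j_{\max}}$, Proposition \ref{prop:hom} kills all contributions from slices $j < j_{\max}$, and the $\on{rad}$ modification at position $0$ is harmless since $B_0^{j_{\max}} = 0$; hence $F_X(C_\bullet)$ reduces to $\on{Hom}(X, B_\bullet^{j_{\max}})$, which is exact. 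Letting $m_0$ be the smallest degree with $B_{m_0}^{j_{\max}} \neq 0$, we have $B_{m_0-1}^{j_{\max}} = 0$, so exactness at degree $m_0$ with $X = B_{m_0}^{j_{\max}}$ yields a section $s$ of $d := (f_{m_0+1})^{j_{\max},j_{\max}}$, i.e.\ $d \circ s = \on{id}_{B_{m_0}^{j_{\max}}}$. Since $d$ is a component of the radical morphism $f_{m_0+1}$ and $(sd)^2 = s(ds)d = sd$, the radical condition forces the idempotent $\on{id} - sd$ to be invertible, hence equal to $\on{id}$, so $sd = 0$; combined with $ds = \on{id}$ this gives $B_{m_0}^{j_{\max}} = 0$, contradicting the choice of $m_0$.

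The symmetric lower bound $j_{\min} \geq i_0$ follows by the dual argument using the exactness of $G_X$ on $C_\bullet$: now $B_\bullet^{j_{\min}}$ is a quotient complex of $C_\bullet$, and for $X \in \mathcal{A}_{j_{\min}}$ one obtains $G_X(C_\bullet) = \on{Hom}(B_\bullet^{j_{\min}}, X)$, again exact. Taking $m_1$ maximal with $B_{m_1}^{j_{\min}} \neq 0$ and applying exactness in degree $m_1$ with $X = B_{m_1}^{j_{\min}}$ produces a retraction $s$ of the radical map $d := (f_{m_1})^{j_{\min},j_{\min}}$; in this direction $\on{id} - sd$ equals $0$, which must be invertible by the radical condition, forcing $B_{m_1}^{j_{\min}} = 0$, a contradiction. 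The main obstacle is the radical-plus-splitting step at the extreme degrees, where one must track which side of the differential the splitting occurs on and then invoke that an invertible idempotent is the identity; once this is in place the rest reduces to bookkeeping on the slice filtration together with the two exactness conditions.
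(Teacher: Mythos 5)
Your proof is correct and follows essentially the same strategy as the paper: use Proposition~\ref{prop:hom} to kill the off-slice contributions of $F_X(C_\bullet)$ (resp.\ $G_X(C_\bullet)$) for $X$ in the extremal occupied slice, and then observe that exactness at the boundary degree is incompatible with the differential being radical. The only cosmetic difference is at that last step: the paper concludes directly from the fact that post-composition with a radical map has image inside $\on{rad}(B_q^j, B_q^j) \subsetneq \on{End}(B_q^j)$, whereas you first extract a splitting and then run the idempotent/invertibility argument, which amounts to the same observation.
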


\begin{proof}
  To reach a contradiction, suppose that the claim is false. Then there is $B_q^j\neq0$ with $j \not\in\left\{ i_0, i_0+1 \right\}$.
	Suppose $j >i_0+1$, and pick $j$ maximal such.
	We can assume $q$ minimal for that value of $j$, i.e.~$B_{q-p}^j = 0$ for all $p>0$. Notice that since $C_0 = B_0^{i_0+1}$
	it follows that $q >0$. We want to prove that $C_\bullet$ cannot be $n$-almost split in this case, and it is enough to 
	show that $F_{B_q^j}(C_\bullet)$ is not exact.
	By Proposition \ref{prop:hom}, $$\on{Hom}(B_{p'}^j, B_p^i) = 0$$ for every $p, p'$ and for every $i <j$. By maximality of $j$, we get that $B^j_{\bullet}$ is a subcomplex of $C_\bullet$, and
	$$
	F_{B_q^j}(C_\bullet) = F_{B_q^j}(B_\bullet^j).
	$$
 	Since $q$ is minimal and $q>0$ we can write explicitly
	$$
	F_{B_q^j}(C_\bullet) = 
	\xymatrix{
	  \cdots \ar[r] &\on{Hom}(B_q^j, B_m^j) \ar[r] &\cdots \ar^-d[r] & \on{Hom}(B_q^j, B_q^j) \ar[r] & 0.
	}
	$$
	The map $d$ in this sequence is composition with a radical morphism, so in particular it cannot be surjective on $\on{Hom}(B_q^j, B_q^j)$. 
	The sequence is then not exact and we have proved that $B_m^j = 0$ for $j > i_0+1$. 
	
	Suppose now that $j <i_0$, and pick $j$ minimal such.
	We can assume that $q$ is maximal for that $j$, i.e.~$B_{q+p}^j = 0$ for all $p>0$. Notice that since $C_{n+1} = B_{n+1}^{i_0}$ it follows that $q<n+1$.
	We prove that $C_\bullet$ is not $n$-almost split in this case by showing that $G_{B_q^j}(C_\bullet)$ is not exact.
	Again by Proposition \ref{prop:hom} we know that $$\on{Hom}(B_p^i, B_{p'}^j) = 0$$ for all $p, p'$ if $i>j$. Then by minimality of $j$ and maximality of $q$ we get
	$$
	G_{B_q^j}(C_\bullet) = 
	\xymatrix{
	  \cdots \ar[r] & \on{Hom}(B_m^j, B_q^j) \ar[r] & \cdots \ar^-{d'}[r] & \on{Hom}(B_q^j, B_q^j) \ar[r] &0
	}
	$$
	and $d'$ cannot be surjective, contradiction. Hence we have proved that $B_m^j = 0$ for $j < i_0$, which completes the proof.
\end{proof}

\begin{thm}
  Let $A$ be an $n$-representation finite $k$-algebra, and let $i_0\in \Z_{\geq 0}$.
  Let $C_{n+1}\in\mathcal A_{i_0}$ be indecomposable noninjective, and let 
	$$
	C_\bullet =
	\xymatrix{
		0 \ar[r]& C_{n+1} \ar[r]^{f_{n+1}}& C_{n} \ar[r] & \cdots \ar[r]& C_1 \ar[r]^{f_1}& C_0 \ar[r] &0
	}
	$$
	be the corresponding $n$-almost split sequence. Then there are complexes 
	$A_\bullet^0\in \mathcal  C_r(\mathcal A_{i_0})$, $A_\bullet^1\in \mathcal  C_r(\mathcal A_{i_0+1})$, and a 
	radical morphism of complexes $\varphi: A_\bullet^0\to A_\bullet^1$, such that $C_\bullet \cong \on{Cone}(\varphi)$ in $\mathcal  C(\mathcal A)$.
	\label{thm:every}
\end{thm}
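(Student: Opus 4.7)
The plan is to use Lemma \ref{lem:claim} to split every term of $C_\bullet$ into exactly two slices, then observe that Proposition \ref{prop:hom} forces the differentials to be block-lower-triangular, which is exactly the shape of a mapping-cone differential.

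More concretely, first I would decompose $C_m = B_m^{i_0} \oplus B_m^{i_0+1}$ for every $m$, which Lemma \ref{lem:claim} guarantees. Writing the differential $f_m : C_m \to C_{m-1}$ in block form with respect to the ordering (slice $i_0$, then slice $i_0+1$), Proposition \ref{prop:hom} gives $\on{Hom}(B_m^{i_0+1}, B_{m-1}^{i_0}) = 0$, so
\[
  f_m = \begin{bmatrix} \alpha_m & 0 \\ \psi_m & \beta_m \end{bmatrix},
\]
where $\alpha_m : B_m^{i_0} \to B_{m-1}^{i_0}$, $\beta_m : B_m^{i_0+1} \to B_{m-1}^{i_0+1}$, and $\psi_m : B_m^{i_0} \to B_{m-1}^{i_0+1}$ are all radical (as components of a radical map).

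Next I would define $A^0_m := B_{m+1}^{i_0}$ with differential $d^{A^0}_m := -\alpha_{m+1}$, define $A^1_m := B_m^{i_0+1}$ with differential $d^{A^1}_m := \beta_m$, and set $\varphi_m := \psi_{m+1} : A^0_m \to A^1_m$. Expanding $f_m \circ f_{m+1} = 0$ gives three identities: $\alpha_m \alpha_{m+1} = 0$, $\beta_m \beta_{m+1} = 0$, and $\beta_m \psi_{m+1} + \psi_m \alpha_{m+1} = 0$. The first two say that $A^0_\bullet$ and $A^1_\bullet$ are complexes in $\mathcal{C}_r(\mathcal{A}_{i_0})$ and $\mathcal{C}_r(\mathcal{A}_{i_0+1})$ respectively (radicality is inherited from $f_\bullet$), and the third rearranges to $d^{A^1}_m \circ \varphi_m = \varphi_{m-1} \circ d^{A^0}_m$, so $\varphi$ is a chain map. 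It is radical componentwise by construction. Complexes $A^0_\bullet, A^1_\bullet$ are automatically concentrated in appropriate degrees (bounded since $C_\bullet$ is).

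Finally I would compare with $\on{Cone}(\varphi)$. By Definition \ref{def:cone}, $\on{Cone}(\varphi)_m = A^0[-1]_m \oplus A^1_m = A^0_{m-1} \oplus A^1_m = B_m^{i_0} \oplus B_m^{i_0+1} = C_m$, and the cone differential is
\[
  \begin{bmatrix} -d^{A^0}_{m-1} & 0 \\ \varphi_{m-1} & d^{A^1}_m \end{bmatrix}
  = \begin{bmatrix} \alpha_m & 0 \\ \psi_m & \beta_m \end{bmatrix} = f_m,
\]
so the identity on components gives an isomorphism $C_\bullet \cong \on{Cone}(\varphi)$ in $\mathcal C(\mathcal A)$, as required.

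The only genuine content is Lemma \ref{lem:claim} together with the vanishing from Proposition \ref{prop:hom}; the rest is bookkeeping. The one place to be careful is the sign conventions in the shift functor $[-1]$ (which is why $d^{A^0}$ is defined with a minus sign relative to $\alpha$), but once those are pinned down the verification is formal.
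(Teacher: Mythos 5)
Your proof is correct and takes essentially the same approach as the paper: decompose via Lemma \ref{lem:claim}, kill the upper-right block via Proposition \ref{prop:hom}, then read off the cone data. The only (cosmetic) difference is a sign: the paper sets $\varphi_m = -\gamma_{m+1}$ and obtains $C_\bullet \cong \on{Cone}(\varphi)$ via a diagonal isomorphism, while your choice $\varphi_m = \psi_{m+1}$ makes the identification an equality on the nose — both are valid.
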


\begin{proof}
	By Lemma \ref{lem:claim} we can rewrite the complex $C_\bullet$ as 
	$$
	C_m = B_m^{i_0} \oplus B_m^{i_0+1}
	$$
	where $B_m^{i_0}\in\mathcal A_{i_0}$ and $B_m^{i_0+1}\in\mathcal A_{i_0+1}$ for every $m$. Moreover, 	
	$$
	f_m = 
	\begin{bmatrix}
	  b_m^{i_0} & \xi_m \\
	  \gamma_m & b_m^{i_0+1}
	\end{bmatrix}: C_m \to C_{m-1}
	$$
	has components $b_m^{i_0} : B_m^{i_0} \to B_{m-1}^{i_0}$, $\xi_m: B_m^{i_0+1}\to B_{m-1}^{i_0}$, $\gamma_m: B_m^{i_0} \to B_{m-1}^{i_0+1}$, and
	$b_m^{i_0+1}: B_m^{i_0+1} \to B_{m-1}^{i_0+1}$. Notice that by Proposition \ref{prop:hom} it follows that $\xi_m= 0$ for all $m$. 
	Define $A_m^0 = B_{m+1}^{i_0}$, $d^{A^0}_m = -b_{m+1}^{i_0}$, $A_m^1 = B_m^{i_0+1}$, $d^{A^1}_m = b_m^{i_0+1}$ and
	$\varphi_m = -\gamma_{m+1}: A_m^0 \to A_m^1$.
	Then $\varphi:A_\bullet^0 \to A_\bullet^1$ is a chain map since
	$$
	d^{A^1}_m \varphi_m = - b_m^{i_0+1} \gamma_{m+1} = \gamma_{m} b_{m+1}^{i_0} = \varphi_{m-1} d^{A^0}_m
	$$
	where the equality
	$$
	b_m^{i_0+1} \gamma_{m+1} +\gamma_{m} b_{m+1}^{i_0} =0
	$$
	comes from the fact that $C_\bullet$ is a complex.
	Moreover, $C_\bullet \cong \on{Cone}(\varphi)$ and we are done.
\end{proof}

\begin{rmk}
  In \cite[Proposition 3.23]{Iya11} Iyama constructed certain $n$-almost split sequences as mapping cones of chain maps. 
  Our Theorem \ref{thm:every} states that in the $n$-representation finite case, every $n$-almost split sequence can in fact be realised as a mapping cone.
\end{rmk}

Given that $n$-almost split sequences are determined up to isomorphism by their endpoints, it is interesting to address the issue of uniqueness of the map $\varphi$. 
Since we are not going to need it in what follows, we do not investigate this in detail. We present however a result:

\begin{prop}
  \label{prop:unique}
  Let $A$ be an $n$-representation finite algebra. Let $A_\bullet^0, B_\bullet^0\in \mathcal C(\mathcal A_{i_0})$, $A_\bullet^1,B_\bullet^1\in \mathcal C(\mathcal A_{i_0+1})$.
  Let $\varphi:A_\bullet^0\to A_\bullet^1$ and $\psi:B_\bullet^0\to B_\bullet^1$ be chain maps.
  Then the following are equivalent:
  \begin{enumerate}
    \item $\on{Cone}(\varphi)\cong \on{Cone}(\psi)$ in $\mathcal C(\mathcal A)$.
    \item There are isomorphisms of complexes $f:A^0_\bullet\to B_\bullet^0, g: A_\bullet^1\to B^1_\bullet$ such that the diagram
      $$\xymatrix{
	A^0_\bullet \ar^f[r]\ar_\varphi[d]& B_\bullet^0\ar^\psi[d]\\
	A^1_\bullet\ar^g[r] &B_\bullet^1
      }$$
      commutes in the homotopy category $\mathcal K(\mathcal A)$.
  \end{enumerate}
\end{prop}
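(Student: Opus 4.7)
The key observation is that each term of $\on{Cone}(\varphi)$ has a canonical decomposition into a summand in $\mathcal A_{i_0}$ and one in $\mathcal A_{i_0+1}$, and by Proposition \ref{prop:hom} there are no nonzero morphisms from $\mathcal A_{i_0+1}$ to $\mathcal A_{i_0}$. This will force any morphism between such cones to be block lower triangular, from which the bijection between the two sets of data is essentially a matter of reading off entries.

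For the implication $(2)\Rightarrow (1)$, let $h$ be a chain homotopy with $g\varphi - \psi f = d^{B^1}h + hd^{A^0}$. I would define a morphism of complexes $\Phi:\on{Cone}(\varphi)\to\on{Cone}(\psi)$ component-wise by the matrix
\[
\Phi_i = \begin{bmatrix} f_{i-1} & 0 \\ h_{i-1} & g_i \end{bmatrix}
\colon A^0_{i-1}\oplus A^1_i \longrightarrow B^0_{i-1}\oplus B^1_i,
\]
and then verify by direct block matrix multiplication that $\Phi$ commutes with the cone differentials, where the $(1,1)$ and $(2,2)$ block identities reduce to $f$ and $g$ being chain maps and the $(2,1)$ block identity is exactly the homotopy relation (taking care of the sign on $d[-1]$). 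Since $\Phi$ is lower triangular with the isomorphisms $f_{i-1}$ and $g_i$ on the diagonal, it is itself an isomorphism in $\mathcal C(\mathcal A)$.

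For the converse $(1)\Rightarrow(2)$, let $\Phi:\on{Cone}(\varphi)\to\on{Cone}(\psi)$ be any isomorphism. Writing $\Phi_i = \bigl[\begin{smallmatrix}a_i & b_i\\ c_i & d_i\end{smallmatrix}\bigr]$ with respect to the decompositions, the block $b_i$ is a morphism from $A^1_i\in\mathcal A_{i_0+1}$ to $B^0_{i-1}\in\mathcal A_{i_0}$, which vanishes by Proposition \ref{prop:hom}. Hence $\Phi$ is lower triangular. Setting $f_{i-1}:=a_i$, $g_i:=d_i$, $h_{i-1}:=c_i$, the chain map condition for $\Phi$ decomposes block by block exactly as in the previous paragraph, yielding that $f$ and $g$ are chain maps and that $g\varphi - \psi f = d^{B^1}h + hd^{A^0}$, so the square commutes in $\mathcal K(\mathcal A)$. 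Finally, applying the same slice-vanishing argument to $\Phi^{-1}$ shows $\Phi^{-1}$ is also lower triangular, and reading off its diagonal blocks produces component-wise inverses to $f$ and $g$ (these inverses are automatically chain maps), so $f$ and $g$ are isomorphisms of complexes.

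The only subtlety is bookkeeping of the signs from the shift $[-1]$ and of the slice indices across the cone's two rows; the argument itself is driven entirely by the vanishing given by Proposition \ref{prop:hom}, and once the block triangular shape is established there is nothing more than a direct matrix calculation.
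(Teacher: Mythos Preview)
Your proof is correct and follows essentially the same approach as the paper: both use Proposition \ref{prop:hom} to force block lower-triangularity, then read off chain maps and a homotopy from the blocks. The only minor variation is in $(1)\Rightarrow(2)$, where the paper shows the diagonal blocks are invertible by observing the off-diagonal block is radical (so the diagonals have inverses modulo radical), whereas you apply Proposition \ref{prop:hom} a second time to $\Phi^{-1}$ and read off the diagonal inverses directly; both arguments are equally valid.
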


\begin{proof}
  Let us begin by some observations. Let 
  \begin{align*}
    \alpha_m = 
    \begin{bmatrix}
      a_m & r_m \\
      q_m & b_m
    \end{bmatrix}
    : A_{m-1}^0\oplus A_m^1 \to B_{m-1}^0\oplus B_m^1
  \end{align*}
  be a morphism of modules. Notice that by Proposition \ref{prop:hom}, we have $r_m = 0$.
  Observe now that
  \begin{align*}
    &\hspace{.55cm}( \alpha_m ) \text{ defines a chain map } \alpha: \on{Cone}(\varphi)\to \on{Cone}(\psi)\\
        &\Leftrightarrow \begin{bmatrix}
      a_{m-1} & 0 \\
      q_{m-1} & b_{m-1}
    \end{bmatrix}
    \begin{bmatrix}
      -d_{m-1}^{A^0} & 0 \\
      \varphi_{m-1}& d_m^{A^1} 
    \end{bmatrix} = 
    \begin{bmatrix}
      -d_{m-1}^{B^0} & 0 \\
      \psi_{m-1}& d_m^{B^1} 
    \end{bmatrix}
\begin{bmatrix}
      a_{m} & 0 \\
      q_{m} & b_{m}
    \end{bmatrix}
    \ \text{ for all } m\\
	&\Leftrightarrow 
	\begin{cases}
	  a_{m-1}d_{m-1}^{A^0} = d_{m-1}^{B^0}a_m \ \text{ for all } m \\
	  b_{m-1} d_{m}^{A^1} = d_m^{^1}b_m \ \text{ for all } m\\
	  b_{m-1}\varphi_{m-1} = \psi_{m-1}a_{m} + d_m^{B^1} q_m + q_{m-1}d_{m-1}^{A^0} \ \text{ for all } m
	\end{cases}
 \\
& \Leftrightarrow 
\begin{cases}
  ( a_m) \text{ defines a chain map } a: A^0[-1]_\bullet\to B^0[-1]_\bullet\\
  ( b_m ) \text{ defines a chain map } b: A_\bullet^1 \to B_{\bullet}^1\\
  b_{m-1}\varphi_{m-1} = \psi_{m-1}a_{m} + d_m^{B^1} q_m + q_{m-1}d_{m-1}^{A^0} \ \text{ for all } m. 
\end{cases}
  \end{align*}

	Now let us prove $(1)\Rightarrow (2)$. Use the same notation as above, and assume that $\alpha$ is an isomorphism. That means that $\alpha_m$ is an isomorphism for every $m$. 
	Since $A_{m-1}^0\in \mathcal A_{i_0}$ and $B_m^1\in \mathcal A_{i_0+1}$, it follows that no indecomposable direct summand of $A_{m-1}^0$ can be isomorphic to a direct summand of $B_m^1$, hence $\on{Hom}(A_{m-1}^0, B_m^1)
  	= \on{rad}(A_{m-1}^0, B_m^1)$. In particular we have that $q_m$ is a radical map. Since $\alpha_m$ has an inverse, both $a_m$ and $b_m$ have inverses modulo radical morphisms. This means that there are
	$x:B_{m-1}^0\to A_{m-1}^0, y:B_m^1\to A_m^1$ such that
	\begin{align*} 
    	a_mx &- \on{id}_{B_{m-1}^0} ,\\
    	xa_m &- \on{id}_{A_{m-1}^0} , \\
    	b_my &- \on{id}_{B_m^1} , \\
    	yb_m &- \on{id}_{B_{m}^1} 
 	 \end{align*}
	 are radical morphisms. In particular $a_mx, xa_m, b_my, yb_m$ are all invertible, hence $a_m$ and $b_m$ are isomorphisms. 
	 By the above observations, $a[1]$ and $b$ are well-defined isomorphisms of complexes, and since $$\left(d_m^{B^1} q_m + q_{m-1}d_{m-1}^{A^0} \right) : A_\bullet^0 \to B_\bullet^1$$
	 is null-homotopic we obtain that the 
	 diagram 
	 $$\xymatrix{
	   	A^0_\bullet \ar^{a[1]}[r]\ar_\varphi[d]& B_\bullet^0\ar^\psi[d]\\
		A^1_\bullet\ar^b[r] &B_\bullet^1
      	 }$$
	 commutes in $\mathcal K(\mathcal A) $ as required.

       Let us now prove $(2)\Rightarrow (1)$. Since the diagram commutes in $\mathcal K(\mathcal A)$, there is a homotopy $\left( q_m: A^0_{m-1}\to B_m^1 \right)$ such that
	$$b_{m-1}\varphi_{m-1} = \psi_{m-1}a_{m} + d_m^{B^1} q_m + q_{m-1}d_{m-1}^{A^0} \ \text{ for all } m.$$ 
	By the above observations, setting for every $m$
	$$
	\alpha_m = 
	\begin{bmatrix}
	  f_{m-1} & 0\\
	  q_m & g_m
	\end{bmatrix}: A_{m-1}^0 \oplus A_m^1 \to B_{m-1}^0 \oplus B_m^1
	$$
	defines a chain map $\alpha: \on{Cone}(\varphi)\to \on{Cone}(\psi)$. 
	It remains to check that $\alpha$ is an isomorphism, which amounts to checking that $\alpha_m$ is invertible for all $m$. Since we are assuming that $f$ and $g$ are isomorphisms, we can define
	for every $m$
	$$
	\beta_m = 
	\begin{bmatrix}
	  f_{m-1}^{-1} &0 \\
	  -g^{-1}_mq_m f^{-1}_{m-1}& g_m^{-1}
	\end{bmatrix}: B_{m-1}^0 \oplus B_m^1 \to  A_{m-1}^0 \oplus A_m^1.$$
	It is then a straightforward computation to check that $\beta_m$ is the inverse of $\alpha_m$, and we are done.
 \end{proof}

\subsection{From cones to sequences}

Since we can realise any $n$-almost split sequence as $\on{Cone}(\varphi)$ for some $\varphi$, it makes sense to relate
the property of being $n$-almost split to the properties of $\varphi$. 
Let us introduce some more notation.
For a given $X \in \mathcal  A$, we define a functor $\tilde F_X: \on{Mor}_r(\mathcal  C_r(\mathcal  A))\to \on{Mor}(\mathcal  C(k\on{mod}))$ by
mapping $\varphi: A_\bullet \to B_\bullet$ to 
$$
\tilde F_X(\varphi) = \varphi\circ-: \on{Hom}(X, A_\bullet) \to F_X(B_\bullet),
$$
where $\on{Hom}(X, A_\bullet)$ denotes the complex $\cdots \to\on{Hom}(X, A_i)\to \on{Hom}(X, A_{i-1})\to\cdots$.
This is well defined because $\varphi_0 \in \on{rad}(A_0, B_0)$.

Consider the mapping cone functor $\on{Cone}: \on{Mor}_r(\mathcal  C_r(\mathcal  A))\to\mathcal  C(\mathcal  A)$. By definition, 
this factors through the inclusion $\mathcal  C_r(\mathcal  A) \to \mathcal  C(\mathcal  A)$, and we still denote by $\on{Cone}$ the corresponding functor $
\on{Cone}: \on{Mor}_r(\mathcal  C_r(\mathcal  A))\to \mathcal  C_r(\mathcal  A)$.
We also denote by $\on{Cone}$ the mapping cone functor $\on{Cone}: \on{Mor}(\mathcal  C(k\on{mod}))\to \mathcal  C(k\on{mod})$.

\begin{lemma}
	With the above notation, we have that the diagram
	$$
	\xymatrix{
	  \on{Mor}_r(\mathcal  C_r^n(\mathcal  A)) \ar[r]^-{\tilde F_X} \ar[d]_{\on{Cone}} &
	  \on{Mor}(\mathcal  C^n(k\on{mod})) \ar[d]^{\on{Cone}} \\
		\mathcal  C_r(\mathcal  A) \ar[r]^{F_X} &
		\mathcal  C(k\on{mod})
	}
	$$
	commutes for every $X\in \mathcal  A$ and for any choice of $n\in \Z_{\geq 0}$.
	\label{lem:commut}
\end{lemma}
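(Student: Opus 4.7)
The plan is to verify the commutativity of the diagram by unfolding the definitions of cone and of $F_X$ and comparing term by term. The argument is essentially bookkeeping; the only subtle point is the interaction between the cone construction and the replacement of $\on{Hom}(X,C_0)$ by $\on{rad}(X,C_0)$ in $F_X$, which is precisely why the top row is restricted to bounded complexes $\mathcal C^n$.

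First I would fix $\varphi:A_\bullet\to B_\bullet$ in $\on{Mor}_r(\mathcal C_r^n(\mathcal A))$ and compute both $F_X(\on{Cone}(\varphi))$ and $\on{Cone}(\tilde F_X(\varphi))$ in each degree $i$. For $1\leq i\leq n+1$, additivity of $\on{Hom}(X,-)$ gives that both are equal to $\on{Hom}(X,A_{i-1})\oplus\on{Hom}(X,B_i)$. In degree $0$, the left-hand side is $\on{rad}(X,\on{Cone}(\varphi)_0)=\on{rad}(X,B_0)$ since $A_{-1}=0$, while the right-hand side is $\on{Hom}(X,A_{-1})\oplus F_X(B_\bullet)_0=0\oplus\on{rad}(X,B_0)$, so the underlying graded objects agree. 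Both complexes vanish outside $0,\ldots,n+1$.

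Next I would check that the differentials coincide. In both cases they are given by the matrix
$$\begin{bmatrix} -d^A_{i-1}\circ - & 0 \\ \varphi_{i-1}\circ - & d^B_i\circ -\end{bmatrix}.$$
On the left this arises as post-composition with $d^{\on{Cone}(\varphi)}_i$ followed by the identification $\on{Hom}(X,A_{i-1}\oplus B_i)=\on{Hom}(X,A_{i-1})\oplus\on{Hom}(X,B_i)$; on the right it is the defining formula of the cone of $\tilde F_X(\varphi)$, the sign in the top-left entry being the shift sign of $\on{Hom}(X,A_\bullet)[-1]$. Well-definedness at the boundary (the map from degree $1$ to degree $0$ lands in $\on{rad}(X,B_0)$) uses that $\varphi_0$ and $d^B_1$ are radical, which is guaranteed by the assumption $\varphi\in\on{Mor}_r$ and $B_\bullet\in\mathcal C_r$.

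Finally, for a morphism in $\on{Mor}_r(\mathcal C_r^n(\mathcal A))$ (i.e.~a commutative square of chain maps), both composites in the diagram yield the chain map between cones obtained by applying $\on{Hom}(X,-)$ componentwise to the block-diagonal map of cones induced by the square; naturality of $\on{Hom}(X,-)$ then closes the argument. The only real obstacle is the careful matching of the shift sign and the verification that the boundary case $i=0$ behaves correctly under the $\on{rad}$-replacement, but no deeper input is needed.
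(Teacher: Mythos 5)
Your proposal is correct and takes essentially the same approach as the paper: both unfold the two composites $F_X\circ\on{Cone}$ and $\on{Cone}\circ\tilde F_X$ degree by degree, match the underlying graded objects (with the degree-$0$ term requiring $A_{-1}=0$, hence the restriction to $\mathcal C^n$), and then identify the differentials, with the radical hypotheses on $\varphi_0$ and $d^B_1$ ensuring the boundary term lands in $\on{rad}(X,B_0)$. The sign you give on the bottom-left entry, $\varphi_{i-1}\circ-$, is the one dictated by the paper's cone convention ($f[-1]_i=f_{i-1}$ carries no sign); the paper writes $-\varphi_{i-1}\circ-$ on both sides of the comparison, an apparent sign slip that does not affect the conclusion, and you additionally note the naturality-on-morphisms check which the paper leaves implicit.
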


\begin{proof}
  Pick a morphism $\varphi: A_\bullet \to B_\bullet\in \on{Mor}_r(\mathcal  C_r^n(\mathcal  A))$.
  Then 
  \begin{align*}
    \on{Cone}(\tilde F_X(\varphi))_i  &= \on{Hom}(X, A_{i-1})\oplus F_X(B_i) = \\ &=
  \begin{cases}
    \on{Hom}(X, A_{i-1})\oplus \on{Hom}(X, B_i)\ \text{ if }\ i \neq 0\\
    \on{Hom}(X, A_{-1}) \oplus \on{rad}(X, B_0)  = \on{rad}(X, B_0)\ \text{ if }\ i = 0
  \end{cases}
  \end{align*}
  and the differential $d_i: \on{Cone}(\tilde F_X(\varphi))_i  \to  \on{Cone}(\tilde F_X(\varphi))_{i-1} $ is given by 
  $$
  d_i = 
  \begin{bmatrix}
    -d^A_{i-1}\circ - & 0 \\
    -\varphi_{i-1}\circ - & d^B_i\circ -
  \end{bmatrix}.
  $$
  On the other hand, we have 
  $$
  F_X(\on{Cone}(\varphi))_i = 
  \begin{cases}
    \on{Hom}(X, A_{i-1}\oplus B_i) = \on{Hom}(X, A_{i-1})\oplus \on{Hom}(X, B_i)\ \text{ if }\ i \neq 0\\
    \on{rad}(X, A_{-1}\oplus B_0) = \on{rad}(X, B_0)\ \text{ if }\ i = 0
  \end{cases}
  $$
  and the differential $d'_i: F_X(\on{Cone}(\varphi))_i \to F_X(\on{Cone}(\varphi))_{i-1}$ is given by
  $$
  d'_i = d^{\on{Cone}(\varphi)}_i\circ - = \begin{bmatrix}
    -d^A_{i-1}\circ - & 0 \\
    -\varphi_{i-1}\circ - & d^B_i\circ-
  \end{bmatrix}.
  $$
\end{proof}

We get a useful criterion for checking whether the cone of a chain map is an $n$-almost split sequence.

\begin{lemma}[Criterion for $n$-almost splitness]
  Let $A^0_\bullet\in \mathcal C_r^{n}(\mathcal A_{i_0}), A^1_\bullet \in \mathcal C_r^{n}(\mathcal A_{i_0+1})$ for some $i_0$. Let $\varphi: A^0_\bullet\to A_\bullet^1$
  be a chain map. Then the following are equivalent:
  \begin{enumerate}
    \item $\on{Cone}(\varphi)$ is an $n$-almost split sequence.
    \item $A^0_{n}$ and $A^1_0$ are indecomposable, and $\tilde F_X(\varphi)$ is a quasi-isomorphism for every $X\in\mathcal A$.
  \end{enumerate}

	\label{lem:criterium}
\end{lemma}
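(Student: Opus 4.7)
The plan is to derive both implications from the preceding Lemmas \ref{lem:qiso} and \ref{lem:commut}. First I would record some preliminary observations about $\on{Cone}(\varphi)$: it has nonzero terms exactly in degrees $0,\dots,n+1$, with endpoints $\on{Cone}(\varphi)_{n+1}=A^0_n$ and $\on{Cone}(\varphi)_0=A^1_0$, and its differentials are automatically radical. Indeed, the diagonal blocks $d^{A^0}, d^{A^1}$ are radical because $A^0_\bullet, A^1_\bullet\in\mathcal C_r^n$, the off-diagonal $\varphi_{i-1}$ is radical by hypothesis (implicit in the domain of $\tilde F_X$), and the remaining block $A^1_i\to A^0_{i-1}$ is forced to vanish by Proposition \ref{prop:hom}, since $A^1_i\in\mathcal A_{i_0+1}$ and $A^0_{i-1}\in\mathcal A_{i_0}$.

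For the direction $(1)\Rightarrow(2)$, indecomposability of $A^0_n$ and $A^1_0$ is part of the definition of an $n$-almost split sequence. For the quasi-isomorphism assertion, Lemma \ref{lem:commut} identifies $F_X(\on{Cone}(\varphi))$ with $\on{Cone}(\tilde F_X(\varphi))$; the former is exact by the functorial-exactness axiom, so Lemma \ref{lem:qiso} forces $\tilde F_X(\varphi)$ to be a quasi-isomorphism for every $X\in\mathcal A$.

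For the direction $(2)\Rightarrow(1)$, indecomposability of the endpoints is given, radicality follows from the preliminary observations, and the same Lemma \ref{lem:commut}/Lemma \ref{lem:qiso} argument applied in the opposite direction produces the functorial exactness of $F_X(\on{Cone}(\varphi))$ at each $X\in\mathcal A$. The only remaining condition is exactness of $\on{Cone}(\varphi)$ as a bounded complex in $\on{mod}A$, and this is the main obstacle.

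To overcome it, I would specialise the just-established functorial exactness at $X=A$. Since $i_0\ge 0$, we have $i_0+1\ge 1$, so $A^1_0\in\mathcal A_{i_0+1}$ has no indecomposable projective summand: the indecomposable projectives of $\mathcal A$ all lie in $\mathcal A_0=\on{add}A$, and distinct slices share no indecomposable. Hence for every indecomposable summand $P$ of $A$ we have $P\not\cong A^1_0$, so $\on{rad}(P,A^1_0)=\on{Hom}(P,A^1_0)$; biadditivity upgrades this to $\on{rad}(A,A^1_0)=\on{Hom}(A,A^1_0)$. Consequently $F_A(\on{Cone}(\varphi))$ coincides with $\on{Hom}(A,\on{Cone}(\varphi))$, which via the canonical identification $\on{Hom}_A(A,M)=M$ is just $\on{Cone}(\varphi)$ itself viewed as a complex of abelian groups. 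Its exactness, already established, is precisely exactness of the sequence in $\on{mod}A$.
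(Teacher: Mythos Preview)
Your argument is correct and follows the same route as the paper, via Lemma~\ref{lem:commut} and Lemma~\ref{lem:qiso}. In fact you are more thorough: the paper's proof does not explicitly verify that $\on{Cone}(\varphi)$ is exact or that its differentials are radical, whereas you supply both (your specialisation to $X=A$, using that $A^1_0\in\mathcal A_{i_0+1}$ has no projective summand so that $F_A$ agrees with $\on{Hom}(A,-)$, is a clean way to recover exactness). One small inaccuracy: the upper-right block of the cone differential is zero by the \emph{definition} of the mapping cone, not by Proposition~\ref{prop:hom}; your appeal to that proposition there is unnecessary (though harmless, and it does explain why $\varphi$ is automatically radical even without that hypothesis being stated).
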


\begin{proof}
  $(1)\Rightarrow (2)$. Suppose that $\on{Cone}(\varphi)$ is $n$-almost split. Then by definition $A^0_{n} = \on{Cone}(\varphi)_{n+1}$ and $A^1_0 = \on{Cone}(\varphi)_0$ 
  are indecomposable and $F_X(\on{Cone}(\varphi))$ is exact for every $X \in\mathcal A $.
  By Lemma \ref{lem:commut} we know that $F_X(\on{Cone}(\varphi)) = \on{Cone}(\tilde F_X(\varphi)) $, and  
  by Lemma \ref{lem:qiso} exactness of $\on{Cone}(\tilde F_X(\varphi))$ implies that $\tilde F_X(\varphi)$ is a quasi-isomorphism.

  $(2)\Rightarrow (1)$. If $\tilde F_X(\varphi)$ is a quasi-isomorphism for every $X\in\mathcal A$, then 
  by Lemma \ref{lem:qiso} we know that $\on{Cone}(\tilde F_X(\varphi))$ is exact, so by Lemma \ref{lem:commut} we get that $F_X(\on{Cone}(\varphi))$ is exact for every $X\in\mathcal A$.
  Then by observing that $\on{Cone}(\varphi)_{n+1} = A^0_{n}$ and $\on{Cone}(\varphi)_0 = A^1_0$ are indecomposable, 
  we can conclude that $\on{Cone}(\varphi)$ is $n$-almost split.
\end{proof}

\section{Tensor product of mapping cones}
\subsection{Construction}

All tensor products are understood to be over $k$, even when it is not specified to simplify the notation.
The tensor product bifunctor 
\begin{align*} 
  - \otimes - : \on{mod}k\times \on{mod}k\to \on{mod}k
\end{align*}
induces (for a general construction, see \cite[IV.4,5]{CE56}) a bifunctor
\begin{align*}
  - \otimes^T - : \mathcal C(\on{mod}k)\times \mathcal C(\on{mod}k)\to \mathcal C(\on{mod}k)
\end{align*}
(for clarity, we use the symbol $\otimes$ for modules and $\otimes^T$ for complexes). 
Moreover, since the tensor product defines a bifunctor
\begin{align*}
  -\otimes - : \on{mod}A\times\on{mod}B\to \on{mod}(A\otimes B)
\end{align*}
we can consider $\otimes^T$ as a bifunctor
\begin{align*}
  -\otimes^T - : \mathcal C(\on{mod}A)\times \mathcal C(\on{mod}B)\to \mathcal C(\on{mod}A\otimes B).
\end{align*}

For convenience, we give the explicit formulas: on objects, we have
\begin{align*}
  (A\otimes^T B)_m = \bigoplus_{j \in \Z} A_j\otimes B_{m-j}
\end{align*}
with differential $d$ given on an element $v\otimes w \in A_j \otimes B_{m-j}$ by
\begin{align*}
  d_m(v\otimes w) = d^A_j(v)\otimes w + (-1)^jv\otimes d^B_{m-j}(w).
\end{align*}
On morphisms, if $\varphi: A_\bullet^0 \to A_{\bullet}^1$ and $\psi: B_\bullet^0 \to B_\bullet^1$ are chain maps, then 
\begin{align*}
  (\varphi\otimes^T\psi)_m = \bigoplus_{j\in \Z} \varphi_j\otimes \psi_{m-j}: \bigoplus_{j \in \Z} A_j^0\otimes B_{m-j}^0 \to\bigoplus_{j \in \Z} A_j^1\otimes B_{m-j}^1.
\end{align*}

\begin{lemma}
  \label{lem:kun}
  Let $A, B$ be finite-dimensional $k$-algebras, let $\mathcal  A, \mathcal  B$ be subcategories of $\on{mod}A$ and $\on{mod}B$ respectively, and let $\varphi: A_\bullet^0\to A_\bullet^1$ and
  $\psi: B_\bullet^0\to B_\bullet^1$ be objects of $\on{Mor}(\mathcal  C(\mathcal  A))$ and $\on{Mor}(\mathcal  C(\mathcal  B))$ respectively. Suppose that both $\varphi$ and $\psi$ are quasi-isomorphisms.
  Then $\varphi\otimes^T\psi$ is a quasi-isomorphism.
\end{lemma}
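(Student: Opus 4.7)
The plan is to reduce to the Künneth formula over a field. Since $k$ is a field, every $k$-module is flat, so $-\otimes_k-$ is biexact on $k$-modules and there are no Tor terms. Moreover, a morphism of complexes of $A\otimes B$-modules is a quasi-isomorphism if and only if it is a quasi-isomorphism on the underlying complexes of $k$-vector spaces, since the homology of a complex of $A\otimes B$-modules has the same underlying $k$-vector space as the homology computed over $k$. Thus it is enough to treat $\varphi\otimes^T\psi$ as a morphism of complexes of $k$-vector spaces.

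I would then invoke the Künneth formula: for any complexes $X_\bullet, Y_\bullet$ of $k$-vector spaces there is a natural isomorphism
$$
H_n(X_\bullet\otimes^T Y_\bullet)\cong \bigoplus_{i+j=n} H_i(X_\bullet)\otimes H_j(Y_\bullet).
$$
Naturality in both variables identifies $H_n(\varphi\otimes^T\psi)$ with $\bigoplus_{i+j=n} H_i(\varphi)\otimes H_j(\psi)$. Since $\varphi$ and $\psi$ are quasi-isomorphisms, each $H_i(\varphi)$ and each $H_j(\psi)$ is a $k$-linear isomorphism; a tensor product of isomorphisms over a field is an isomorphism, and so is a direct sum of isomorphisms. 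Hence $H_n(\varphi\otimes^T\psi)$ is an isomorphism for every $n$, which is exactly the claim.

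I expect no real obstacle: the argument reduces to Künneth plus the observation that being a quasi-isomorphism is detected on underlying $k$-vector spaces. If one preferred to avoid invoking Künneth, an alternative would be to factor $\varphi\otimes^T\psi = (\varphi\otimes^T\on{id})\circ(\on{id}\otimes^T\psi)$ and handle each factor separately via the natural isomorphism $\on{Cone}(\on{id}\otimes^T\theta)\cong X_\bullet\otimes^T\on{Cone}(\theta)$. Combined with Lemma \ref{lem:qiso}, this reduces the question to showing that tensoring an acyclic complex of $k$-vector spaces with any complex of $k$-vector spaces stays acyclic, which is immediate from the exactness of $-\otimes_k-$ over a field.
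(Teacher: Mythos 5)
Your argument is essentially the same as the paper's: both invoke the K\"unneth formula over a field to identify $H_n(\varphi\otimes^T\psi)$ with $\bigoplus_{i+j=n}H_i(\varphi)\otimes H_j(\psi)$ and conclude from the fact that tensor products and direct sums of isomorphisms are isomorphisms. The extra remarks about detecting quasi-isomorphisms on underlying $k$-vector spaces and the alternative factorization are fine but not needed.
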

\begin{proof}
  This follows from the K\"unneth formula over a field (see \cite[VI.3.3.1]{CE56}). That is, for complexes $A_\bullet$ and $B_\bullet$ there is for every $n$ a functorial isomorphism
  $$
  H_n(A_\bullet\otimes^T B_\bullet) \cong \bigoplus_{i+j = n}H_{i}(A_\bullet) \otimes H_j(B_\bullet).
  $$
  In our case, this gives for every $n$ an isomorphism
  $$
  H_n(\varphi \otimes^T\psi) \cong \left( H_i(\varphi)\otimes H_{j}\left( \psi \right) \right)_{i+j= n}.
  $$
  Since $\varphi$ and $\psi$ are quasi-isomorphisms, the right-hand side is an isomorphism, hence $\varphi\otimes^T\psi$ is a quasi-isomorphism.
\end{proof}

\subsection{Preparation}

We now focus on the tensor product of homogeneous algebras. In this case the tensor product behaves well (recall that we are assuming $k$ to be perfect).
More precisely, we have the following classical result:

\begin{prop}
  Let $A, B$ be finite-dimensional $k$-algebras. Then 
  \label{prop:gldim}
  $$
  \on{gl.dim}(A\otimes_k B) = \on{gl.dim}(A) + \on{gl.dim}(B).
  $$
\end{prop}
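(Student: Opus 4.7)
The plan is to reduce the claimed equality to a computation of projective dimensions of simple modules, and then handle those by tensoring minimal projective resolutions. Since $A\otimes_k B$ is a finite-dimensional $k$-algebra, its global dimension equals the supremum of the projective dimensions of its simple modules, so it is enough to understand $\on{pd}_{A\otimes B}(U)$ for $U$ simple.

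The key input from the assumption that $k$ is perfect is twofold. First, every simple $A\otimes B$-module has the form $S\otimes_k T$ for some simple $A$-module $S$ and some simple $B$-module $T$. Second, the Jacobson radical of the tensor product decomposes as $\on{rad}(A\otimes B)= \on{rad}(A)\otimes B + A\otimes \on{rad}(B)$. Both are classical consequences of perfectness that I would cite rather than reprove. With these in hand, the proposition follows if I can show
\[
  \on{pd}_{A\otimes B}(S\otimes T) = \on{pd}_A(S) + \on{pd}_B(T)
\]
for all simple $S,T$, and then take the supremum over such pairs.

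To prove this equality, I would take minimal projective resolutions $P_\bullet\to S$ and $Q_\bullet\to T$ over $A$ and $B$ respectively (so every differential is a radical morphism). The complex $P_\bullet\otimes^T Q_\bullet$ has terms that are direct sums of $P_i\otimes Q_j$, which are projective over $A\otimes B$ because tensor products of projectives are projective over the tensor product algebra. Applying Lemma \ref{lem:kun} (the K\"unneth formula, via the quasi-isomorphisms $P_\bullet\to S$ and $Q_\bullet\to T$) shows that $P_\bullet\otimes^T Q_\bullet\to S\otimes T$ is a projective resolution of length at most $\on{pd}_A(S)+\on{pd}_B(T)$, yielding the inequality $\on{gl.dim}(A\otimes B)\leq \on{gl.dim}(A)+\on{gl.dim}(B)$. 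For the reverse inequality I would use the radical decomposition above: each differential of $P_\bullet\otimes^T Q_\bullet$ is a $k$-linear combination of maps $d^P\otimes \on{id}$ and $\on{id}\otimes d^Q$ whose images land in $\on{rad}(A)\otimes B + A\otimes\on{rad}(B) = \on{rad}(A\otimes B)$ times the appropriate term, so the tensor resolution is itself minimal. Thus its length is \emph{exactly} $\on{pd}_A(S)+\on{pd}_B(T)$, and choosing $S,T$ that realise the two global dimensions produces the opposite bound.

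The main obstacle is not really a calculation but making sure the two perfectness inputs are correctly invoked: without them, simple $A\otimes B$-modules need not be pure tensors, and the radical of $A\otimes B$ can be strictly larger than $\on{rad}(A)\otimes B+A\otimes\on{rad}(B)$, in which case minimality of $P_\bullet\otimes^T Q_\bullet$ can fail and the lower bound breaks down. Everything else is a formal consequence of the K\"unneth formula and the standard characterisation of projective dimension over finite-dimensional algebras via minimal resolutions.
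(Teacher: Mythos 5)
Your overall strategy is sound and is a genuinely different route from the paper's own proof, which simply cites Auslander's reduction to the semisimple case together with a result of Kreimer. However, as written your argument has a real gap in the first ``key input.'' The claim that every simple $A\otimes_k B$-module is of the form $S\otimes_k T$ with $S,T$ simple is false, even over a perfect (indeed characteristic-zero) field. Take $k=\mathbb{R}$, $A=B=\mathbb{C}$: then $A\otimes_k B\cong\mathbb{C}\times\mathbb{C}$ has two non-isomorphic simple modules, each of $k$-dimension $2$, whereas the only pure tensor $S\otimes T=\mathbb{C}\otimes_\mathbb{R}\mathbb{C}$ is the regular module and has $k$-dimension $4$, hence is not simple. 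What perfectness buys you is that $(A/\on{rad}A)\otimes(B/\on{rad}B)$ is semisimple, so that simples of $A\otimes B$ are \emph{direct summands} of modules $S\otimes T$; they need not themselves be pure tensors.

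The fix is easy and preserves the spirit of your argument: do not resolve individual simple modules. Instead use the standard identity $\on{gl.dim}R = \on{pd}_R(R/\on{rad}R)$, take minimal projective resolutions $P_\bullet\to A/\on{rad}A$ and $Q_\bullet\to B/\on{rad}B$ of lengths $\on{gl.dim}A$ and $\on{gl.dim}B$, and observe (using the radical formula $\on{rad}(A\otimes B)=\on{rad}A\otimes B+A\otimes\on{rad}B$, which the paper records as Lemma \ref{lem:rad1}) that $P_\bullet\otimes^T Q_\bullet$ is a \emph{minimal} projective resolution of $(A/\on{rad}A)\otimes(B/\on{rad}B)\cong(A\otimes B)/\on{rad}(A\otimes B)$; exactness comes from the K\"unneth argument of Lemma \ref{lem:kun}. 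Its length is then exactly $\on{gl.dim}A+\on{gl.dim}B$, and this equals $\on{gl.dim}(A\otimes B)$. Equivalently, you can keep your pairwise formula $\on{pd}(S\otimes T)=\on{pd}(S)+\on{pd}(T)$ and note that any simple $A\otimes B$-module is a summand of some $S\otimes T$ to obtain the upper bound, with the lower bound as you had it. Either repair closes the gap; the false claim about pure tensors should be dropped.
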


\begin{proof}
  Using a result by Auslander (\cite[Theorem 16]{Aus55}),
  we can assume that $A$ and $B$ are semisimple. Then the claim is a special case of \cite[Corollary 5.7]{Kre79}.
\end{proof}

In our setting, perfectness of the ground field and homogeneity are enough to guarantee that higher representation finiteness is preserved by tensor products:

\begin{thm}
  \label{thm:martin}
  Let $A$ be an $n$-representation finite $k$-algebra, and let $B$ be an $m$-representation finite $k$-algebra. 
  If $A$ and $B$ are $l$-homogeneous, then the algebra $A\otimes_k B$ is
  $(n+m)$-representation finite, $l$-homogeneous. Moreover, an $(n+m)$-cluster tilting module for $A\otimes_k B$ is given by
  $$
  M_{A\otimes B} = \bigoplus_{i= 0}^{l-1} \tau_n^{-i} A\otimes \tau_m^{-i}B.
  $$
\end{thm}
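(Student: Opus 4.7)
The plan is to verify directly that
\[
  M := \bigoplus_{s=0}^{l-1} \tau_n^{-s}A \otimes \tau_m^{-s}B
\]
is an $(n{+}m)$-cluster tilting module for $A \otimes B$, from which $(n{+}m)$-representation finiteness follows, and to read off $l$-homogeneity from the orbit structure of $M$. Proposition~\ref{prop:gldim} gives $\on{gl.dim}(A \otimes B) = n+m$ at once. The $s=0$ summand is $A \otimes B$ itself, and by $l$-homogeneity of both factors the $s=l-1$ summand is $DA \otimes DB \cong D(A \otimes B)$, so $M$ contains every indecomposable projective and injective $(A\otimes B)$-module.

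For the Ext-vanishing half of the cluster-tilting condition, I would use the K\"unneth isomorphism over the field $k$,
\[
  \on{Ext}^p_{A \otimes B}(L \otimes N,\, L' \otimes N') \cong \bigoplus_{i+j=p} \on{Ext}^i_A(L, L') \otimes \on{Ext}^j_B(N, N'),
\]
which comes from tensoring projective resolutions of $L$ over $A$ and $N$ over $B$. The cluster-tilting properties of $M_A$ and $M_B$ force the nonzero summands to sit at $(i,j) \in \{0,n\} \times \{0,m\}$, so for $0 < p < n+m$ only the $(n,0)$- and $(0,m)$-positions can contribute. On a pair of summands $\tau_n^{-s}A \otimes \tau_m^{-s}B,\, \tau_n^{-t}A \otimes \tau_m^{-t}B$ of $M$, the $(n,0)$-contribution is
\[
  \on{Ext}^n_A(\tau_n^{-s}A, \tau_n^{-t}A) \otimes \on{Hom}_B(\tau_m^{-s}B, \tau_m^{-t}B).
\]
Auslander--Reiten duality rewrites the Ext factor as $D\on{Hom}_A(\tau_n^{-t}A, \tau_n^{-s+1}A)$, which by Proposition~\ref{prop:hom} vanishes unless $t \leq s-1$; meanwhile Proposition~\ref{prop:hom} applied directly makes the Hom factor vanish unless $s \leq t$. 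The two conditions are incompatible, so the product is always zero, and the $(0,m)$-term is handled symmetrically.

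The main obstacle is the exhaustiveness clause: showing that every indecomposable $X \in \on{mod}(A\otimes B)$ which is Ext-orthogonal to $M$ in degrees $0 < i < n+m$ in fact lies in $\on{add}M$. My approach is to pass to the derived category and use the identity $\nu_{A \otimes B} \cong \nu_A \otimes \nu_B$ on perfect complexes. A direct computation yields
\[
  \nu_{n+m}^{-s}(A \otimes B) \cong \tau_n^{-s}A \otimes \tau_m^{-s}B \quad \text{for } 0 \leq s \leq l-1,
\]
each a module concentrated in degree $0$, together with $\nu_{n+m}^{-l}(A \otimes B) \cong (A \otimes B)[n+m]$, which exhibits $A \otimes B$ as fractionally Calabi--Yau with period $l$. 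Combined with the rigidity just established, the characterisation of $n$-representation finiteness via fractionally Calabi--Yau $n$-cluster-tilting orbits (cf.~\cite{Iya11}, \cite{HI11}) upgrades $M$ to a genuine $(n{+}m)$-cluster tilting module; $l$-homogeneity then follows because each $\nu_{n+m}$-orbit of an indecomposable projective has length exactly $l$ before returning as a shift. Perfectness of $k$ enters here, as in \cite{HI11}, to guarantee that tensor products of indecomposable summands of $M_A$ and $M_B$ remain indecomposable over $A \otimes B$, so that the listed summands of $M$ are in bijection with the indecomposables of $\on{add}M_{A\otimes B}$.
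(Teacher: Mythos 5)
The paper itself gives no proof of this theorem --- it simply cites \cite[1.5]{HI11} --- so the comparison is really against the argument in that reference, and your sketch does reconstruct its two central ingredients: the K\"unneth isomorphism for Ext to establish rigidity of $M$, and the compatibility $\nu_{A\otimes B}\cong \nu_A\otimes \nu_B$ to track the $\nu_{n+m}^{-1}$-orbit of $A\otimes B$ and hence prove maximality and $l$-homogeneity. Two small points deserve flagging. First, in the rigidity step, concluding that $\operatorname{Ext}^i_A(M_A,M_A)$ is supported in $\{0,n\}$ uses \emph{both} the cluster-tilting condition (vanishing for $0<i<n$) \emph{and} $\operatorname{gl.dim}A\le n$ (vanishing for $i>n$); your phrasing attributes it only to cluster-tilting, which on its own would leave $i>n$ open. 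Second, and more substantively, perfectness of $k$ does \emph{not} guarantee that $L\otimes_k N$ is indecomposable for indecomposable $L$ and $N$ (for instance $\C\otimes_\R\C\cong\C\times\C$), so that last sentence is incorrect as stated. What perfectness actually supplies --- and what \cite{HI11} uses --- is that $\operatorname{End}(L)/\operatorname{rad}$ is a separable $k$-algebra for every module $L$, which gives $\operatorname{rad}(A\otimes B)=\operatorname{rad}A\otimes B + A\otimes\operatorname{rad}B$ (as in Lemma~\ref{lem:rad1}); the maximality and homogeneity arguments then go through at the level of possibly non-basic modules and orbits, without needing each $L\otimes N$ to stay indecomposable. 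Finally, the exhaustiveness step is left at the level of an invocation of a ``characterisation via fractionally Calabi--Yau orbits''; the precise statement needed is the one in \cite{HI11} (their Proposition~1.3), namely that for a ring-indecomposable algebra $\Lambda$ with $\operatorname{gl.dim}\Lambda\le n$, being $n$-representation finite and $l$-homogeneous is equivalent to $\nu_n^{-i}\Lambda\in\operatorname{mod}\Lambda$ for $0\le i\le l-1$ together with $\nu_n^{-(l-1)}\Lambda\cong D\Lambda$. Once you pin this down, your computation of $\nu_{n+m}^{-s}(A\otimes B)$ closes the argument.
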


\begin{proof}
  See \cite[1.5]{HI11}.
\end{proof}

\begin{prop}
  Let $A$ and $B$ be two finite-dimensional $k$-algebras. Let $M, N \in\on{mod}A$ and $M', N'\in \on{mod}B$.
  Then the canonical map
  \begin{align*}\label{eq:hom}
    \on{Hom}_A(M, N) \otimes_k  \on{Hom}_B(M', N') \to \on{Hom}_{A\otimes_k B}(M\otimes_k M', N\otimes_k N') 
  \end{align*}
    given by $f\otimes g\mapsto f\otimes g$
  is an isomorphism of $k$-vector spaces.
  \label{prop:iso}
\end{prop}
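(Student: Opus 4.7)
The plan is to establish the isomorphism by d\'evissage, reducing the general case to that of free modules via finite presentations (which exist because $A$ and $B$ are finite-dimensional, hence noetherian, and all modules in sight are finitely generated).

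Write $F(M,M') = \on{Hom}_A(M,N)\otimes_k\on{Hom}_B(M',N')$ and $G(M,M') = \on{Hom}_{A\otimes_k B}(M\otimes_k M', N\otimes_k N')$, and let $\Phi \colon F \to G$ denote the natural transformation $f\otimes g\mapsto f\otimes g$. Both $F$ and $G$ are contravariant in $M$ and in $M'$, and I would first check that each is \emph{left exact} in each variable separately. For $F$ this is immediate: $\on{Hom}_A(-,N)$ is left exact, and tensoring with the fixed $k$-vector space $\on{Hom}_B(M',N')$ preserves exactness since $k$ is a field. For $G$, the functor $-\otimes_k M'$ is exact (again because $k$ is a field), so it sends a presentation of $M$ to an exact sequence of $A\otimes_k B$-modules; composing with the contravariant left-exact $\on{Hom}_{A\otimes_k B}(-, N\otimes_k N')$ then yields the left exactness of $G$ in its first argument, and symmetrically in its second.

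Next I would verify the base case: when $M=A$ and $M'=B$ both sides are canonically identified with $N\otimes_k N'$, and $\Phi_{A,B}$ is the identity. By additivity of all four functors in the first two arguments, $\Phi_{M,M'}$ is an isomorphism whenever $M\cong A^p$ and $M'\cong B^q$ for some $p,q\geq 0$.

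Finally, pick finite presentations $A^p\to A^q\to M\to 0$ and $B^r\to B^s\to M'\to 0$. Fixing $M'=B^s$ and applying $F(-,B^s)$ and $G(-,B^s)$ to the presentation of $M$ produces a commutative ladder with exact rows whose two rightmost vertical maps are isomorphisms by the previous step; the five-lemma then forces $\Phi_{M,B^s}$ to be an isomorphism, and likewise $\Phi_{M,B^r}$. A symmetric argument in the second variable, with $M$ now arbitrary, extends this to $\Phi_{M,M'}$ for all finitely generated $M, M'$. The main bookkeeping point is to track the $A\otimes_k B$-module structure on $M\otimes_k M'$ when $M$ or $M'$ is replaced by a presentation --- the tensor product of module maps is $A\otimes_k B$-linear precisely because the $A$- and $B$-actions on $M\otimes_k M'$ commute --- but once this is noted, the proof is routine.
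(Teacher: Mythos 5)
Your proof is correct. The paper's ``proof'' is a bare citation to Cartan--Eilenberg (Proposition XI.1.2.3 and Theorem XI.3.1), so it spells out no argument of its own; your d\'evissage reconstructs the standard one behind that reference. All the key ingredients are in place: left exactness of both sides in each variable (for $G(-,M')$ the point, which you correctly flag, is that $-\otimes_k M'$ is exact because $k$ is a field, so a free presentation of $M$ over $A$ yields a free presentation of $M\otimes_k M'$ over $A\otimes_k B$, and $\operatorname{Hom}_{A\otimes_k B}(-,N\otimes_k N')$ then gives left exactness); the base case $M=A$, $M'=B$, where $\Phi$ is the identity on $N\otimes_k N'$; additivity to pass to finite free modules $A^p$, $B^q$; and the five lemma applied twice, first in the variable $M$ with $M'$ free, then in the variable $M'$ with $M$ arbitrary. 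The closing remark that $\Phi$ is well defined and natural because the tensor product of an $A$-linear and a $B$-linear map is $A\otimes_k B$-linear (the two actions commuting on $M\otimes_k M'$) is exactly the bookkeeping one must note. The only cosmetic point is that finite presentations are available already because $A$ and $B$ are finite-dimensional over a field, which is slightly more elementary than invoking noetherianity, but this does not affect correctness.
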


\begin{proof}
  See Proposition XI.1.2.3 and Theorem XI.3.1 in \cite{CE56}.
\end{proof}

We will use the above identification quite freely from now on.
We need two more lemmas:
\begin{lemma}
  \label{lem:rad1}
  Let $R$ and $S$ be finite-dimensional $k$-algebras. Then we have
      $$
      \on{rad}(R) \otimes_k S + R\otimes_k \on{rad}(S) = \on{rad}(R\otimes_k S)
      $$
      as ideals of $R\otimes_k S$.
 \end{lemma}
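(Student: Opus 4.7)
The plan is to prove both inclusions separately, using the two basic characterisations of the Jacobson radical of a finite-dimensional algebra: it is both the largest nilpotent ideal and the smallest ideal with semisimple quotient.

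For the inclusion $\on{rad}(R)\otimes_k S + R\otimes_k \on{rad}(S)\subseteq \on{rad}(R\otimes_k S)$, I would argue that each summand is a nilpotent ideal. Indeed, $\on{rad}(R)$ is nilpotent (as $R$ is finite-dimensional), say $\on{rad}(R)^N=0$, and then $(\on{rad}(R)\otimes_k S)^N\subseteq \on{rad}(R)^N\otimes_k S=0$; similarly for the other summand. The sum of two nilpotent two-sided ideals is nilpotent, and a nilpotent ideal is automatically contained in the Jacobson radical.

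For the reverse inclusion, I would identify
\begin{equation*}
  (R\otimes_k S)/\bigl(\on{rad}(R)\otimes_k S + R\otimes_k \on{rad}(S)\bigr) \cong (R/\on{rad}(R))\otimes_k (S/\on{rad}(S)).
\end{equation*}
Both factors on the right are finite-dimensional semisimple $k$-algebras. Since $k$ is perfect (our standing assumption), every finite-dimensional semisimple $k$-algebra is separable, and the tensor product of a separable $k$-algebra with any semisimple $k$-algebra is again semisimple. Thus the quotient above is semisimple, and by the minimality property of the Jacobson radical we get $\on{rad}(R\otimes_k S)\subseteq \on{rad}(R)\otimes_k S + R\otimes_k \on{rad}(S)$.

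The two inclusions combine to give the desired equality. The only non-routine ingredient is the preservation of semisimplicity under tensor product, which is where perfectness of $k$ enters; everything else reduces to standard nilpotency and ideal manipulations. I would cite a standard reference (e.g.\ \cite{CE56}) for the separability/semisimplicity fact rather than reprove it here.
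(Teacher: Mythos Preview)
Your proposal is correct. The paper's own ``proof'' is simply a citation: it quotes \cite[Corollary 5.8]{Kre79}, which is a statement about the Baer radical of a tensor product, and then observes (via \cite[Proposition 10.27]{Lam01}) that for finite-dimensional algebras the Baer radical and the Jacobson radical coincide. By contrast, you give the standard direct argument using the two characterisations of $\on{rad}$ as the largest nilpotent ideal and the smallest ideal with semisimple quotient. Your write-up is more self-contained and has the virtue of making explicit where perfectness of $k$ is used (precisely at the step where $(R/\on{rad}R)\otimes_k(S/\on{rad}S)$ must be semisimple); the paper's citation-based proof hides this inside the reference. Either route is fine, and the underlying mathematics is the same.
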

\begin{proof}
  This is \cite[Corollary 5.8]{Kre79}, combined with the observation that for finite-dimensional algebras the Baer radical and the Jacobson radical coincide (see \cite[Proposition 10.27]{Lam01}).
      \end{proof}

 \begin{lemma}
   Let $A$ and $B$ be two finite-dimensional $k$-algebras. Let $M, N\in \on{mod}A$ and $M', N' \in\on{mod}B$. 
   Then we have
   $$
   \on{rad}(M, N)\otimes\on{Hom}(M',N') + \on{Hom}(M, N)\otimes \on{rad}(M', N') = \on{rad}(M\otimes M', N\otimes N')
   $$
   as subspaces of $\on{Hom}(M\otimes M', N\otimes N')$.
   Moreover, there is an exact sequence
		    $$
	    \xymatrix{
			0 \ar[r] & \on{rad}(M)\otimes \on{rad}(M') \ar^-*{\left[\begin{smallmatrix}
				\alpha \\-\alpha
			\end{smallmatrix}\right]}[r] 
				& *+{\begin{smallmatrix}\on{rad}(M) \otimes \on{End}(M')\\ \bigoplus \\ \on{End}(M)\otimes \on{rad}(M') \end{smallmatrix}}\ar^-*{\left[\begin{smallmatrix}
				\alpha &\alpha
			    \end{smallmatrix}\right]}[r]
				& \on{rad}(M\otimes M') \ar[r] &0
			      }
			  $$
	where 
$$
\alpha: f\otimes g \mapsto f\otimes g.
$$
   \label{lem:rad}
 \end{lemma}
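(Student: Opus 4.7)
The plan is to reduce both statements to Lemma \ref{lem:rad1} using the standard identification of Hom spaces as corners of endomorphism algebras. Set $R = \on{End}_A(M \oplus N)$ and $S = \on{End}_B(M' \oplus N')$, with idempotents $e_M, e_N \in R$ and $e_{M'}, e_{N'} \in S$ projecting onto the respective summands. Under the classical identification $\on{Hom}_A(X, Y) = e_Y R e_X$ (for $X, Y \in \{M, N\}$), the invertibility condition that defines $\on{rad}_A(-,-)$ in the introduction translates into $\on{rad}_A(X, Y) = e_Y \cdot \on{rad}(R) \cdot e_X$, and the analogous identities hold for $S$. By Proposition \ref{prop:iso}, $R \otimes_k S \cong \on{End}_{A \otimes B}((M \oplus N) \otimes (M' \oplus N'))$, and under this isomorphism $e_X \otimes e_{X'}$ is sent to the projection onto $X \otimes X'$. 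Consequently
$$\on{rad}_{A \otimes B}(M \otimes M', N \otimes N') = (e_N \otimes e_{N'}) \cdot \on{rad}(R \otimes S) \cdot (e_M \otimes e_{M'}).$$
Expanding the middle factor by Lemma \ref{lem:rad1} and using that $(e_N \otimes e_{N'})(x \otimes y)(e_M \otimes e_{M'}) = (e_N x e_M) \otimes (e_{N'} y e_{M'})$ yields the first claim after distributing.

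For the exact sequence, specialise the just-proved equality to $M = N$ and $M' = N'$: this says that $[\alpha \ \alpha]$ maps onto $\on{rad}(M \otimes M')$. Its kernel consists of pairs $(x, y)$ with $\alpha(x) + \alpha(y) = 0$ in $\on{End}(M \otimes M')$, so the crucial step is to identify the subspace
$$(\on{rad}(M) \otimes \on{End}(M')) \cap (\on{End}(M) \otimes \on{rad}(M')) \subseteq \on{End}(M) \otimes \on{End}(M')$$
with $\on{rad}(M) \otimes \on{rad}(M')$, and simultaneously to know that $\alpha$ is injective on this intersection. Both are instances of the standard property of tensor products of vector spaces over a field: for any subspaces $U \subseteq V$ and $U' \subseteq V'$, one has $(U \otimes V') \cap (V \otimes U') = U \otimes U'$ inside $V \otimes V'$, seen immediately by extending chosen bases of $U, U'$ to bases of $V, V'$. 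Given this, the image of $\left[\begin{smallmatrix} \alpha \\ -\alpha\end{smallmatrix}\right]$ is precisely the kernel of $[\alpha \ \alpha]$, and the map is injective since $\alpha$ is injective on $\on{rad}(M) \otimes \on{rad}(M')$.

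The main obstacle, although purely technical, is bookkeeping for the idempotent description $\on{rad}_A(X, Y) = e_Y \on{rad}(\on{End}(X \oplus Y)) e_X$, since the definition of $\on{rad}_A(-,-)$ in the introduction is phrased via the invertibility of $\on{id}_X - gf$ rather than directly via the Jacobson radical. Once this standard link is recorded, the rest of the argument amounts to combining Lemma \ref{lem:rad1} with the elementary intersection identity for subspace tensor products over a field.
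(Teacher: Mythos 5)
Your proposal is correct and follows essentially the same route as the paper: both reduce the first equality to Lemma~\ref{lem:rad1} via $R=\on{End}_A(M\oplus N)$, $S=\on{End}_B(M'\oplus N')$, Proposition~\ref{prop:iso}, and the corner identity $\on{rad}_A(X,Y)=e_Y\,\on{rad}(R)\,e_X$, and both obtain the exact sequence by computing the kernel of $[\alpha\ \alpha]$ in the case $M=N$, $M'=N'$. The only difference is cosmetic: the paper says the kernel computation is "easy," while you spell out the needed intersection identity $(U\otimes V')\cap(V\otimes U')=U\otimes U'$ for subspaces over a field.
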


\begin{proof}
  Let $R = \on{End}_A(M\oplus N)$ and $S= \on{End}_B(M'\oplus N')$.
  By Proposition \ref{prop:iso} we have 
  $$
  R\otimes S \cong \on{End}_{A\otimes B}((M\oplus N) \otimes (M'\oplus N')).
  $$
  Let $p, q\in R$ be the projections onto $M, N$ respectively, and let $p', q'\in S$ be the projections onto $M', N'$ respectively. 
  Then we have
  \begin{align*}
    (q\otimes q') (\on{rad}(R\otimes S))(p\otimes p') = \on{rad}(M\otimes M', N\otimes N').
  \end{align*}
  By Lemma \ref{lem:rad1}, 
  \begin{align*}
    \on{rad}(R\otimes S) =  \on{rad}(R) \otimes S + R\otimes \on{rad}(S) 
  \end{align*}
  so that
  \begin{align*}
    \on{rad}(M\otimes M', N\otimes N') &= (q\otimes q') (\on{rad}(R) \otimes S + R\otimes \on{rad}(S))(p\otimes p') = \\
    &=\on{rad}(M, N)\otimes\on{Hom}(M',N') + \on{Hom}(M, N)\otimes \on{rad}(M', N'),
  \end{align*}
  which proves the first claim.
  Moreover, in the case $M= N, M'=N'$ we easily get the exact sequence 
  by looking at the kernel of the map
  $$
 \xymatrix{
	*{\left[\begin{matrix}
				\alpha & \alpha
			\end{matrix}\right]}& 
			*---{:}&
			*+{\begin{smallmatrix}\on{rad}(M) \otimes \on{End}(M')\\ \bigoplus \\ \on{End}(M)\otimes \on{rad}(M') \end{smallmatrix}}
			\ar[r] &
			\on{rad}(M\otimes M').
	}
$$
\end{proof}

\subsection{Proof of main result} 

We are ready to prove Theorem $\ref{thm:main}$:

\begin{proof}[Proof of Theorem \ref{thm:main}]
  We fix $\varphi: A_\bullet^0\to A_\bullet^1$ and $\psi: B_\bullet^0\to B_\bullet^1$.
  By definition $C_\bullet= \on{Cone}(\varphi\otimes^T\psi)$ is a complex bounded between 0 and $n+m+1$, and it is exact by Lemma \ref{lem:qiso} and Lemma \ref{lem:kun}. 
  It follows from Lemma \ref{lem:rad} that $$
  (\varphi\otimes^T \psi)_i \in \on{rad}((A_{\bullet}^0\otimes^T B_\bullet^0)_i, (A_\bullet^1\otimes^T B_\bullet^1)_i)
  $$
  for every $i$, and so $C_\bullet \in \mathcal  C_r(\mathcal {A\otimes B})$.
	Fix an indecomposable $M\otimes N\in\mathcal {A\otimes B}$.
	We can consider the maps 
  	$$
  	\tilde F_M(\varphi)\otimes^T \tilde F_N(\psi) : \on{Hom}(M, A^0_\bullet)\otimes^T \on{Hom}(N, B^0_\bullet)
	\to F_M(A^1_\bullet) \otimes^T F_N(B^1_\bullet)
 	$$
  	and
  	$$
	\tilde F_{M\otimes N}(\varphi\otimes^T \psi) : \on{Hom}(M\otimes N, A^0_\bullet\otimes^T B^0_\bullet) \to F_{M\otimes N}( A^1_\bullet\otimes^T B^1_\bullet).
  	$$
	By Lemma \ref{lem:rad}, the map $$\iota: \on{Hom}(M, A_{\bullet}^1) \otimes^T \on{Hom}(N, B_\bullet^1) \to \on{Hom}(M\otimes N, A^1_\bullet\otimes^T B^1_\bullet),
	\ \ f\otimes g\mapsto f\otimes g$$
  	induces a monomorphism
  	$$
	\iota': F_M(A^1_\bullet) \otimes^T F_N(B_\bullet^1) \to F_{M\otimes N}(A_\bullet^1\otimes^T B_\bullet^1)
  	$$
  	so there is a commutative diagram
	$$
	\xymatrix{
	  \on{Hom}(M, A^0_\bullet)\otimes^T \on{Hom}(N, B^0_\bullet) \ar[r]^-\iota\ar[d]_*{ \tilde F_M(\varphi)\otimes^T \tilde F_N(\psi) } 
	  & \on{Hom}(M\otimes N, A^0_\bullet\otimes^T B^0_\bullet) \ar[d]^*{\tilde F_{M\otimes N}(\varphi\otimes^T \psi) }\\
	  F_M(A^1_\bullet) \otimes^T F_N(B^1_\bullet)\ar[r]_-{\iota'}
	  &  F_{M\otimes N}(A_\bullet^1\otimes^T B_\bullet^1).
	}
	$$
	By Proposition \ref{prop:iso}, the map $\iota$ is an isomorphism. Moreover, since $\on{Cone}(\varphi)$ is $n$-almost split, it follows by Lemma \ref{lem:criterium} that $\tilde F_M(\varphi)$ is a quasi-isomorphism, and
	similarly $\tilde F_N(\psi)$ is a quasi-isomorphism because $\on{Cone}(\psi)$ is $m$-almost split.
	Then by Lemma \ref{lem:kun} it follows that $ \tilde F_M(\varphi)\otimes^T \tilde F_N(\psi)$ is a quasi-isomorphism.
	Again by Lemma \ref{lem:criterium}, the claim that $C_\bullet$ is $(m+n)$-almost split will follow if we prove that $\tilde F_{M\otimes N}(\varphi \otimes^T \psi)$ is a quasi-isomorphism (since $M\otimes N$ is an arbitrary
	indecomposable).
  	By the above observations, it is enough to show that $\iota'$ is a quasi-isomorphism. This is in turn equivalent to $\on{coker} \iota'$ being exact, which is what we prove.
  	We claim that we have
	\begin{equation}\label{eq:coker}
		\on{coker} \iota' = F_M(A_\bullet^1)\otimes_k S(N,B_0^1) \oplus S(M,A_0^1)\otimes_k F_N(B_\bullet^1).
	\end{equation}
	Assume that this claim holds, and let us prove the theorem.
	Notice that $S(N, B_0^1) = 0$ unless $N \cong B_0^1$ since $B_{0}^1$ and $N$ are indecomposable.
	Suppose that $N \cong B_0^1$. Then in particular $N \in \on{add}\tau_m^{-(i+1)}B$ and so $M\in \on{add}\tau_n^{-(i+1)}A$ since $M\otimes N\in \mathcal {A\otimes B}$ (see Theorem \ref{thm:martin}). Then by Proposition \ref{prop:hom} we get 
	$$\on{Hom}(M,A_\bullet^0) = 0.$$
	In this case $F_M(A_\bullet^1) \cong \on{Cone}(\tilde F_M(\varphi))$ which by Lemma \ref{lem:criterium} is exact if and only if $\on{Cone}(\varphi)$ is $n$-almost split, which we are assuming. 
	Tensoring over $k$ is exact, so it follows that the first summand in $(\ref{eq:coker})$ is exact. By symmetry, the second summand is exact as well and we are done.
		
	It remains to prove the equality (\ref{eq:coker}).
	Call $D_\bullet = F_M(A_\bullet^1)\otimes^T F_N(B_\bullet^1)$. We have that 
	$$D_p = \bigoplus_{i+j= p}F_M(A_\bullet^1)_i\otimes F_N(B_\bullet^1)_j$$
	and we are interested in computing the cokernels of the maps
	$$
	\iota'_p: D_p \to F_{M\otimes N}\left(A_\bullet^1\otimes^T B_\bullet^1\right)_p.
	$$
	We proceed by first considering the case $p \neq 0$. Then the codomain of $\iota'_p$ is 
	$$
	\on{Hom}\left(M\otimes N, \bigoplus_{i+j=p}A_i^1\otimes B_j^1\right) \cong \bigoplus_{i+j=p}\on{Hom}(M, A_i^1)\otimes \on{Hom}(N, B_j^1)
	$$
	and $\iota'_p$ is just the canonical diagonal immersion with components 
	$$
	\iota'_{ij}: F_M(A_\bullet^1)_i\otimes F_N(B_\bullet^1)_j \to \on{Hom}(M,A_i^1)\otimes \on{Hom}(N,B_j^1)
	$$
	given by $f\otimes g \mapsto f\otimes g$.
	In particular, $\iota'_{ij}$ is the identity unless either $i=0$ and $M\cong A_0^1$ or $j=0$ and $N\cong B_0^1$.
	It follows that $$
	\on{coker}\iota'_p = \bigoplus_{i+j=p}\on{coker}\iota'_{ij} = \on{coker}\iota'_{0p} \oplus \on{coker}\iota'_{p0}.
	$$
	Let us then suppose $N\cong B_0^1$, and focus on terms of the form $\on{coker}\iota'_{p0}$, where
	$$\iota'_{p0}: \on{Hom}(M, A_p^1)\otimes \on{rad}(B_0^1) \to \on{Hom}(M\otimes B_0^1, A_p^1\otimes B_0^1).$$
	We know by Proposition \ref{prop:iso} that the right-hand side is canonically isomorphic to $\on{Hom}(M,A_p^1)\otimes \on{End}(B_0^1)$, so from 
	the exact sequence
	$$
	\xymatrix{
		0 \ar[r] &
		\on{rad}(B_0^1) \ar[r] &
		\on{End}(B_0^1)\ar[r] &
		S(B_0^1) \ar[r]
		&0
	}
	$$
	we conclude that $\on{coker}\iota'_{p0} = \on{Hom}(M,A_p^1)\otimes S(B_0^1)$.
	By symmetry we conclude that if $p \neq 0$ then 
	$$
	\on{coker}\iota'_p = \on{Hom}(M,A_p^1)\otimes S(N, B_0^1) \oplus S(M, A_0^1)\otimes \on{Hom}(N,B_p^1).
	$$
	Let us analyse the case $p = 0$. Under the identification given by Proposition \ref{prop:iso}, the map
	$$
	\iota'_0 : \on{rad}(M, A_0^1) \otimes \on{rad}(N, B_0^1)\to \on{rad}(M\otimes N, A_0^1\otimes B_0^1)
	$$
	is the identity if $M\not\cong A_0^1$ and $N\not \cong B_0^1$, and the inclusion otherwise.
	If $M \not \cong A_0^1$ and $N\cong B_0^1$, then we are in the same situation as in the previous case, and
	$$
	\on{coker}\iota'_0 = \on{Hom}(M, A_0^1)\otimes S(B_0^1)
	$$
	and similarly for the symmetric case. If both $M\cong A_0^1$ and $N\cong B_0^1$, then we claim that 
	$$\on{coker}\iota'_0 = \on{rad}(A_0^1)\otimes S(B_0^1)\oplus S(A_0^1)\otimes \on{rad}(B_0^1).$$
	Indeed (for simplicity, write $E = A_0^1$ and $F= B_0^1$),
			in the commutative diagram	
			$$
			\xymatrix{
				&0 \ar[d] & 0 \ar[d] & 0 \ar[d] & \\
				0 \ar[r] & \on{rad}(E)\otimes \on{rad}(F) \ar^-*{\left[\begin{smallmatrix}
				\alpha\\-\alpha
			\end{smallmatrix}\right]}[r] \ar@{=}[d]
			&*+{\begin{smallmatrix}\on{rad}(E)\otimes \on{rad}(F)\\ \bigoplus \\\on{rad}(E)\otimes \on{rad}(F)\end{smallmatrix}}
				\ar^-*{\left[\begin{smallmatrix}
				\alpha&  \alpha
			\end{smallmatrix}\right]}[r] \ar^*+{\left[\begin{smallmatrix}
			\alpha & 0\\0 & \alpha
		\end{smallmatrix}\right]}[d] 
		& \on{rad}(E)\otimes \on{rad}(F) \ar[r] \ar[d]^{\iota'_0} &0\\
				0 \ar[r] & \on{rad}(E)\otimes \on{rad}(F) \ar^-*{\left[\begin{smallmatrix}
				\alpha \\-\alpha
			\end{smallmatrix}\right]}[r] \ar[d]
				& *+{\begin{smallmatrix}\on{rad}(E) \otimes \on{End}(F)\\ \bigoplus \\ \on{End}(E)\otimes \on{rad}(F) \end{smallmatrix}}\ar^-*{\left[\begin{smallmatrix}
				\alpha &\alpha
			    \end{smallmatrix}\right]}[r] \ar[d]
				& \on{rad}(E\otimes F) \ar[r]\ar[d] &0\\
				& 0 & *+{\begin{smallmatrix}\on{rad}(E) \otimes S(F)\\ \bigoplus\\ S(E) \otimes \on{rad}(F)\end{smallmatrix}} \ar[d] 
					& \on{coker}\iota'_0 \ar[d] &\\
				&&0&0&
			}
			$$
			the first row is exact, as well as all the columns ($\alpha$ denotes the canonical map $f\otimes g\mapsto f\otimes g$). The second row is exact by Lemma \ref{lem:rad}. Hence we get an isomorphism
		$$\on{rad}(E) \otimes S(F) \oplus S(E) \otimes \on{rad}(F)\cong \on{coker}\iota'_0$$ by the $3\times 3$ lemma.
		We have shown that 
		$$
		\on{coker}\iota'_p = F_M(A_p^1)\otimes S(N,B_0^1) \oplus S(M,A_0^1)\otimes F_N(B_p^1)
		$$
		for every value of $p = 0, \ldots, m+n$.	

		It remains to show that the differentials $\on{coker}\iota'_{p+1}\to \on{coker}\iota'_p$ 
		are diagonal, to conclude that the direct-sum decomposition of the objects is actually a direct-sum 
		decomposition into the two complexes appearing in equation (\ref{eq:coker}).
		The only degree where this poses problems is $p = 0$ in the case $M \cong E = A_0^1,\ N\cong F=B_0^1$. For this, consider the following diagram: 
		\vspace{.3cm}
		$$
		\xymatrix{
			*{\begin{smallmatrix}
				\on{Hom}(E, A_1^1)\otimes \on{rad}(F)\\
				\bigoplus\\
				\on{rad}(E)\otimes\on{Hom}(F, B_1^1)
		\end{smallmatrix}}
		\ar^{\beta}[r]\ar[d]_{\iota'_1}
		&
		\on{rad}(E)\otimes\on{rad}(F)
		\ar^{\iota'_0}[d]\\
		*{\begin{smallmatrix}
			\on{Hom}(E,A_1^1)\otimes \on{End}(F)\\
			\bigoplus\\
			\on{End}(E)\otimes \on{Hom}(F, B_1^1)
		\end{smallmatrix}}
		\ar^{\beta}[r]\ar[d]&
		\on{rad}(E\otimes F)
		\ar[d]\\
		*{\begin{smallmatrix}
			\on{Hom}(E,A_1^1)\otimes S(F)\\
			\bigoplus\\
			S(E)\otimes\on{Hom}(F,B_1^1)
		\end{smallmatrix}}
		\ar[r]
		& \on{coker}\iota'_0
		}
		\vspace{.3cm}
		$$
	      where the horizontal maps are induced by $$\beta = \begin{bmatrix}
	      	(d^A_1\circ -)\otimes \on{id}, & \on{id}\otimes (d^B_1\circ -)
	      \end{bmatrix},$$ which is the last
		map appearing in 
		the sequence $F_E(A_\bullet^1)\otimes^T F_F(B_\bullet^1)$.
		The map $\beta$ factors as
		$$
		\beta=	
			\begin{bmatrix}
				\alpha&\alpha
			\end{bmatrix}
			\begin{bmatrix}
				(d^A_1\circ -)\otimes \on{id}&0\\
				0	& \on{id}\otimes (d^B_1\circ -)
			\end{bmatrix}
				$$
		hence the diagram above can be completed to a diagram
		\vspace{.3cm}
		$$
		\xymatrix{
		*{\begin{smallmatrix}
				\on{Hom}(E,A_1^1)\otimes \on{rad}(F)\\
				\bigoplus\\
				\on{rad}(E)\otimes\on{Hom}(F, B_1^1)
		\end{smallmatrix}}
		\ar[r]\ar[d]_{\iota'_1}
		&
		*{\begin{smallmatrix}\on{rad}(E)\otimes \on{rad}(F)\\
				\bigoplus\\
				\on{rad}(E)\otimes \on{rad}(F)\end{smallmatrix}}
			      \ar^-*{\left[ \begin{smallmatrix}
				\alpha & \alpha
			    \end{smallmatrix}\right]}[r]\ar[d]
		&
		\on{rad}(E)\otimes\on{rad}(F)
		\ar^{\iota'_0}[d]\\
		*{\begin{smallmatrix}
			\on{Hom}(E,A_1^1)\otimes \on{End}(F)\\
			\bigoplus\\
			\on{End}(E)\otimes \on{Hom}(F,B_1^1)
		\end{smallmatrix}}
		\ar[r]\ar[d]
		&	
		*{\begin{smallmatrix}
			\on{rad}(E)\otimes \on{End}(F)\\
			\bigoplus\\
			\on{End}(E)\otimes \on{rad}(F)
		\end{smallmatrix}}
		\ar^-*{\left[ \begin{smallmatrix}
				\alpha & \alpha
			    \end{smallmatrix}\right]}[r]\ar[d]
		&
		\on{rad}(E\otimes F)
		\ar[d]\\
		*{\begin{smallmatrix}
			\on{Hom}(E,A_1^1)\otimes S(F)\\
			\bigoplus\\
			S(E)\otimes\on{Hom}(F,B_1^1)
		\end{smallmatrix}}
		\ar[r]&
		*{\begin{smallmatrix}
			\on{rad}(E)\otimes S(F)\\
			\bigoplus\\
			S(E)\otimes \on{rad}(F)
		\end{smallmatrix}}
		\ar^-*{\cong}[r]&	\on{coker}\iota'_0\\
		},
		\vspace{.3cm}
		$$
		where the horizontal maps on the left-hand side are diagonal.
		Hence the induced map 
		$$
		\xymatrix{
			*{\begin{smallmatrix}
			\on{Hom}(E,A_1^1)\otimes S(F)\\
			\bigoplus\\
			S(E)\otimes\on{Hom}(F,B_1^1)
		\end{smallmatrix}}
		\ar[rr]&&
		\on{coker}\iota'_0
		}
		$$
		factors through the diagonal map 
		$$
		\xymatrix{
			*{\left[\begin{smallmatrix}
				(d^A_1\circ -)\otimes \on{id}_{S(F)}&0\\
				0	& \on{id}_{S(E)}\otimes (d^B_1\circ -)
			\end{smallmatrix}\right]}
			&*---{:}
			&
			*{\begin{smallmatrix}
			\on{Hom}(E,A_1^1)\otimes S(F)\\
			\bigoplus\\
			S(E)\otimes\on{Hom}(F,B_1^1)
			\end{smallmatrix}}
			\ar[r]&
			*{\begin{smallmatrix}
			\on{rad}(E)\otimes S(F)\\
			\bigoplus\\
			S(E)\otimes \on{rad}(F)
			\end{smallmatrix}}
		}
		$$
		and we are done.

\end{proof}

\section{Examples}
As an example, consider the quiver
$$
\xymatrix{
  1\ar[r]&2\ar[r]& 3&4\ar[l]&5\ar[l]
}
$$
and the corresponding path algebra $A = kQ$.
Thus $A$ is 3-homogeneous, (1\nolinebreak-) representation finite (see \cite{ASS06}, \cite{HI11}). We want to consider the algebra $B = A\otimes A$, which is then 3-homogeneous, 2-representation finite.
There are 15 nonisomorphic indecomposables in $\on{mod}A$, which have the following dimension vectors:
\begin{align*}
  \begin{matrix}
    P_1 : (11100) & M_1 : (01111) & I_1: (10000)\\
    P_2 : (01100) & M_2 : (01000) & I_2: (11000)\\
    P_3 : (00100) & M_3 : (01110) & I_3: (11111)\\
    P_4 : (00110) & M_4 : (00010) & I_4: (00011)\\
    P_5 : (00111) & M_5 : (11110) & I_5: (00001).\\
  \end{matrix}
\end{align*}
The Auslander-Reiten quiver of $A$ is the following:
$$
\xymatrix{
  & & P_1 \ar@{.}[rr]\ar[dr] & & M_4 \ar@{.}[rr]\ar[dr]&&I_5\\
  &P_2\ar[ru]\ar@{.}[rr]\ar[dr]& & M_5\ar[ru]\ar[dr]\ar@{.}[rr]& & I_4\ar[ru]&\\
  P_3\ar[ru]\ar@{.}[rr]\ar[dr]& & M_3\ar[ru]\ar[dr]\ar@{.}[rr]& & I_3\ar[ru]\ar[dr]&&\\
  &P_4\ar[ru]\ar@{.}[rr]\ar[dr]& & M_1\ar[ru]\ar[dr]\ar@{.}[rr]& & I_2\ar[rd]&\\
  &&P_5\ar[ru]\ar@{.}[rr]& & M_2\ar[ru]\ar@{.}[rr]& & I_1
}
$$
where the dotted lines represent $\tau^-_{1}$.

Inside $\on{mod}B$ we have the 2-cluster tilting subcategory $\mathcal  C = \on{add}M$, where $$M = \bigoplus_{1\leq i,j\leq  5}P_i\otimes P_j\oplus \bigoplus_{1\leq i,j\leq 5}M_i\otimes M_j\oplus \bigoplus_{1\leq i,j\leq  5} I_i\otimes I_j.$$
Let us consider for instance the (1-)almost split sequences
$$
C_\bullet = 
\xymatrix{
  0\ar[r] 
  & P_2\ar^-*{\left[\begin{smallmatrix}
  a\\b
\end{smallmatrix}\right]}[r]
  & *{P_1\oplus M_3}\ar^-*{\left[\begin{smallmatrix}
  c&d
\end{smallmatrix}\right]}[r]
  &M_5\ar[r]
  &0
}
$$
and
$$
D_\bullet = 
\xymatrix{
  0\ar[r] 
  & P_5\ar^e[r]
  & M_1\ar^f[r]
  &M_2\ar[r]
  &0
}
$$
in $\on{mod}A$.
Notice that both these sequences start in slice 0.
The sequence $C_\bullet$ is isomorphic to the cone of 
$$
\xymatrix{
  \cdots \ar[r]& 0\ar[r]\ar[d] & P_2 \ar^{-a}[r]\ar_{-b}[d]& P_1\ar[r]\ar_{-c}[d]& 0 \ar[r]\ar[d]& \cdots\\
  \cdots \ar[r]& 0\ar[r] & M_3\ar^d[r]& M_5\ar[r] &0 \ar[r]&\cdots
}
$$
and $D_\bullet$ is isomorphic to the cone of 
$$
\xymatrix{
  \cdots\ar[r]&0\ar[r]\ar[d]& P_5 \ar[r]\ar[d]_{-e}&0\ar[r]\ar[d]&0\ar[r]\ar[d]&\cdots\\
  \cdots\ar[r]&0\ar[r]& M_1\ar[r]^f&M_2\ar[r]&0\ar[r]&\cdots
}
$$
where these diagrams should be seen as morphisms $\varphi, \psi$ of chain complexes.
Then we can construct the morphism $\varphi\otimes^T\psi$:
$$
\xymatrix{
  \cdots\ar[r]&0\ar[r]\ar[d]& P_2\otimes P_5 \ar[r]^{-a\otimes 1}\ar[d]_{b\otimes e}& P_1\otimes P_5\ar[rr]\ar[d]^*{\left[\begin{smallmatrix}
  0\\c\otimes e
\end{smallmatrix}\right]}&& 0\ar[r]\ar[d]&0\ar[r]\ar[d]&\cdots\\
\cdots\ar[r]&0\ar[r]& M_3\otimes M_1\ar[r]^*{\left[\begin{smallmatrix}
  -1\otimes f\\ d\otimes 1
\end{smallmatrix}\right]} &
  *{\begin{smallmatrix}
  M_3\otimes M_2\\\oplus\\M_5\otimes M_1
\end{smallmatrix}}\ar[rr]^-*{\left[ \begin{smallmatrix}
  d\otimes 1&1\otimes f
\end{smallmatrix}\right]}&&M_5\otimes M_2\ar[r]&0\ar[r]&\cdots
}
$$
The cone $E_\bullet = \on{Cone}(\varphi\otimes^T\psi)$ is then the sequence
$$
\xymatrix{
  0\ar[r]& P_2\otimes P_5\ar[r]^*{\left[ \begin{smallmatrix}
    a\otimes 1 \\- b\otimes e
\end{smallmatrix}\right]} 
  & *{\begin{smallmatrix}
  P_1\otimes P_5 \\
  \oplus\\
  M_3\otimes M_1
\end{smallmatrix}}\ar[rr]^*{\left[ \begin{smallmatrix}
  0 &-1\otimes f\\
  -c\otimes e&d\otimes 1
\end{smallmatrix}\right]}
&& *{\begin{smallmatrix}
  M_3\otimes M_2\\\oplus\\M_5\otimes M_1
\end{smallmatrix}}\ar[rr]^*{\left[\begin{smallmatrix}
  d\otimes 1&1\otimes f
\end{smallmatrix}\right]}
&& M_5\otimes M_2\ar[r]
& 0
}
$$
which is 2-almost split in $\mathcal  C$ by Theorem \ref{thm:main}.

Now we can go further, and consider the algebra $B\otimes A$, which is then 3-homogeneous, 3-representation finite. 
Let us write for simplicity $P_{abc} = P_a\otimes P_b\otimes P_c$ and $M_{abc} = M_a\otimes M_b\otimes M_c$. We look at the 3-almost split sequence starting in $P_{254}$, which is 
obtained from $E_\bullet$ together with the sequence
$$
\xymatrix{
  0\ar[r]& P_4\ar[r]& P_5\oplus M_3\ar[r]& M_1\ar[r]&0
}
$$
in $\on{mod}A$.
By applying the formula we get the sequence
$$
\xymatrix{
  &&&&P_{155}\ar[rrd]&&&&\\
  &&P_{154}\ar[rru]\ar[rrd]|\hole&&\oplus&&M_{511}\ar[rrdd]&&\\
  &&\oplus&&M_{513}\ar[rru]\ar[rrd]|\hole&&\oplus&&\\
  P_{254}\ar[rr]\ar[rruu]\ar[rrdd]&&P_{255}\ar[rruuu]\ar[rrd]&&\oplus&&M_{523}\ar[rr]&&M_{521}\\
  &&\oplus&&M_{311}\ar[rruuu]\ar[rrd]&&\oplus&&\\
  &&M_{313}\ar[rruuu]|\hole\ar[drr]\ar[rru]&&\oplus&&M_{321}\ar[rruu]&&\\
  &&&&M_{323}\ar[rru]\ar[rruuu]|\hole&&&&
}
$$
where each arrow is the natural morphism up to sign.
\vspace{.5cm}
\paragraph{\textbf{Acknowledgement.}} The author is thankful to his advisor Martin Herschend for the constant supervision and helpful discussions, as well as for useful ideas about some proofs.
The author would also like to thank an anonymous referee for the helpful suggestions.
The author was funded by Uppsala University.

\bibliography{Bibliography}{}

\begin{thebibliography}{ARS97}

\bibitem[ARS97]{ARS97}
Maurice Auslander, Idun Reiten, and Sverre~O. Smal{\o}.
\newblock {\em Representation theory of {A}rtin algebras}, volume~36 of {\em
  Cambridge Studies in Advanced Mathematics}.
\newblock Cambridge University Press, Cambridge, 1997.
\newblock Corrected reprint of the 1995 original.

\bibitem[ASS06]{ASS06}
Ibrahim Assem, Daniel Simson, and Andrzej Skowro{\'n}ski.
\newblock {\em Elements of the representation theory of associative algebras.
  {V}ol. 1}, volume~65 of {\em London Mathematical Society Student Texts}.
\newblock Cambridge University Press, Cambridge, 2006.
\newblock Techniques of representation theory.

\bibitem[Aus55]{Aus55}
Maurice Auslander.
\newblock On the dimension of modules and algebras. {III}. {G}lobal dimension.
\newblock {\em Nagoya Math. J.}, 9:67--77, 1955.

\bibitem[CE56]{CE56}
Henri Cartan and Samuel Eilenberg.
\newblock {\em Homological algebra}.
\newblock Princeton University Press, Princeton, N. J., 1956.

\bibitem[GM03]{GM03}
Sergei~I. Gelfand and Yuri~I. Manin.
\newblock {\em Methods of homological algebra}.
\newblock Springer Monographs in Mathematics. Springer-Verlag, Berlin, second
  edition, 2003.

\bibitem[HI11]{HI11}
Martin Herschend and Osamu Iyama.
\newblock {$n$}-representation-finite algebras and twisted fractionally
  {C}alabi-{Y}au algebras.
\newblock {\em Bull. Lond. Math. Soc.}, 43(3):449--466, 2011.

\bibitem[Iya07]{Iya07}
Osamu Iyama.
\newblock Higher-dimensional {A}uslander-{R}eiten theory on maximal orthogonal
  subcategories.
\newblock {\em Adv. Math.}, 210(1):22--50, 2007.

\bibitem[Iya08]{Iya08}
Osamu Iyama.
\newblock Auslander-{R}eiten theory revisited.
\newblock In {\em Trends in representation theory of algebras and related
  topics}, EMS Ser. Congr. Rep., pages 349--397. Eur. Math. Soc., Z\"urich,
  2008.

\bibitem[Iya11]{Iya11}
Osamu Iyama.
\newblock Cluster tilting for higher {A}uslander algebras.
\newblock {\em Adv. Math.}, 226(1):1--61, 2011.

\bibitem[Kre79]{Kre79}
Jan Krempa.
\newblock On semisimplicity of tensor products.
\newblock In {\em Ring theory ({P}roc. {A}ntwerp {C}onf. ({NATO} {A}dv. {S}tudy
  {I}nst.), {U}niv. {A}ntwerp, {A}ntwerp, 1978)}, volume~51 of {\em Lecture
  Notes in Pure and Appl. Math.}, pages 105--122. Dekker, New York, 1979.

\bibitem[Lam01]{Lam01}
Tsi-Yuen Lam.
\newblock {\em A first course in noncommutative rings}, volume 131 of {\em
  Graduate Texts in Mathematics}.
\newblock Springer-Verlag, New York, second edition, 2001.

\end{thebibliography}
\bibliographystyle{alpha}

\end{document}